\newtheorem{Theorem}{Theorem}[section]
\newtheorem{Lemma}[theorem]{Lemma}
\newtheorem{Proposition}[theorem]{Proposition}
\newtheorem{Definition}[theorem]{Definition}
\newtheorem{Remark}{Remark}[section]
\def\relu{\mathrm{ReLU}}
\def\relu2{\mathrm{ReLU}^2}
\title{Convergence Rate   Analysis for Deep Ritz Method  \footnote{This work has been presented at
workshop on "Data Driven Scientific Computing"  held in Southern University of Science and Technology, China, on November 21-22, 2020.}}
\author{Chenguang Duan \thanks{School of Mathematics and Statistics, Wuhan University, Wuhan 430072, P.R. China. (cgduan.math@whu.edu.cn)}\quad\and
Yuling Jiao\thanks{School of Mathematics and Statistics, and
Hubei Key Laboratory of Computational Science, Wuhan University, Wuhan 430072, P.R. China. (yulingjiaomath@whu.edu.cn)}\quad\and
Yanming Lai \thanks{School of Mathematics and Statistics,  Wuhan University, Wuhan 430072, P.R. China. (laiyanming@whu.edu.cn)}\quad\and
 Xiliang Lu\thanks{School of Mathematics and Statistics, and
Hubei Key Laboratory of Computational Science, Wuhan University, Wuhan 430072, P.R. China. (xllv.math@whu.edu.cn)}\quad\and
Jerry Zhijian Yang \thanks{School of Mathematics and Statistics, and
Hubei Key Laboratory of Computational Science, Wuhan University, Wuhan 430072, P.R. China. (zjyang.math@whu.edu.cn)}
}
\begin{document}
\maketitle

\begin{abstract}
Using deep neural networks to solve PDEs has attracted  a lot of attentions recently.
However, why the deep learning method works is falling  far behind its empirical success.
In this paper, we provide a rigorous numerical analysis on deep Ritz method  (DRM) \cite{wan11} for second order elliptic equations with Neumann boundary conditions.
We establish the first   nonasymptotic convergence rate in $H^1$ norm   for   DRM  using   deep networks with $\mathrm{ReLU}^2$ activation functions.
In addition to providing a theoretical justification of DRM, our study  also shed light   on how to set the  hyper-parameter  of depth and width to achieve the desired convergence rate in terms of number of training samples.
Technically, we derive   bound on  the approximation error of deep $\mathrm{ReLU}^2$ network in $H^1$ norm and  bound on the Rademacher complexity  of the non-Lipschitz composition of  gradient norm  and     $\mathrm{ReLU}^2$ network, both of which are of independent interest.
\end{abstract}

\section{Introduction}
\par Partial differential equations (PDEs) have broad applications in physics, chemistry, biology, geology and engineering. A great deal of efforts have been devoted to  studying   numerical methods for solving PDEs \cite{brenner2007mathematical,ciarlet2002finite,leveque2007finite,wan14,wan15}.
 However, it is still a challenging task to develop  numerical scheme for  solving PDEs in  high-dimension.
Due to the success of deep learning for high-dimensional data analysis  in computer  vision and  natural language processing, people have been paying  more attention to using (deep) neural network to solve PDEs in high dimension with may be  complex domain, an idea   that goes back to 1990's \cite{lee1990neural,lagaris1998artificial}.
In the last few years, there are  growing literatures  on neural network based numerical methods for PDEs 
These works can be roughly  classified into two categories.

 In the first category, deep neural networks are used to improve classical methods.
\cite{artificial_viscosity} designs a neural network to estimate artificial viscosity in discontinuous Galerkin  schemes,  see also \cite{chen2020friedrichs}.
\cite{indicator} trains a neural network serving as a troubled-cell indicator in high-resolution schemes for conservation laws.
\cite{CiCP-28-2075} proposes a universal discontinuity detector using convolution neural network  and applies it in conjunction of solving nonlinear conservation.
\cite{CiCP-28-2158} uses reinforcement learning to find new and potentially better data-driven solvers for conservation laws.

In the second category, deep neural networks are utilized to approximate the solution of the PDEs directly. Being  benefit from the excellent approximation power   of deep neural networks and SGD training, these methods have been successfully applied to    solve PDEs in high-dimension.
\cite{dgm02,dgm14} convert nonlinear parabolic PDEs into backward stochastic differential equations and solve them by deep neural networks, which can deal with high-dimensional problems.
Methods based on the strong form of PDEs \cite{raissi2019physics,wan12} are also proposed.
In \cite{raissi2019physics}, physics-informed neural networks (PINNs) use the squared residuals on the domain   as the loss function and treat  boundary conditions   as  penalty term.  There are several extensions of PINNs for different types of PDEs, including fractional PINNs \cite{fpinns}, nonlocal PINNs \cite{npinns}, conservative PINNs \cite{jagtap2020conservative}, eXtended PINNs \cite{xpinns}, among others.
A similar method presented in \cite{DeepXDE} proposes a residual-based adaptive refinement method to improve the training efficiency.

 In contrast to minimizing squared residuals of strong form, a natural alternative approach to derive loss functions are based on the variational form of PDEs \cite{wan11,wan}.
Inspired by Ritz method, \cite{wan11} proposes deep Ritz method (DRM) to solve  variational problems arising  from PDEs.
The idea of Galerkin method has also been used in \cite{wan}, where, they  propose a deep  Galerkin method  (DGM) via  reformulating   the problem of finding the weak solution of PDEs into an operator norm minimization problem induced by  the weak formulation.

\subsection{Related works and contributions}
Although there are great empirical achievements in recent
years as mentioned above, a challenging and interesting  question is that can we give
rigorous analysis to guarantee their  performances as people has done in
the classical counterpart  such as finite element method (FEM) \cite{ciarlet2002finite} and  finite difference method \cite{leveque2007finite} ?
Several recent efforts  have been devoted to making processes along this line.
\cite{luo2020two} consider the optimization and generalization error of second-order linear PDEs with two-layer neural networks in the scenario  of over-parametrization.
 \cite{shin2020convergence,mishra2020estimates,shin2020error} study the convergence of PINNs with deep neural networks. When we were about to finish our draft, we aware that \cite{lu2021priori} give an error analysis that focuses  on analyzing   one hidden layer  shallow networks   with ReLU-Cosine activation functions  to  solve elliptic PDEs whose  solutions are  restricted  to  spectral Barron space, see also  \cite{xu2020finite} for handling general equations with solutions living in   spectral Barron space via  two layer $\mathrm{ReLU}^k$ networks.
\textbf{ Two  important   questions have not been addressed   in the above mentioned related study are those:  what is the influence  of  the topological structure of the networks, say the   depth and width, in the quantitative  error analysis ?   How to determine these hyper-parameters to achieve a desired convergence rate ?}
In this paper, we give a firm answers on these  questions by  studying  convergence ratec of the deep Ritz method
to solve second order elliptic equations with Neumann boundary conditions
by using   $\mathrm{ReLU}^2$ networks with arbitrary depth.
As far as we know, we establish the first   nonasymptotic bound on   DRM.
The main contritions of this paper are summarized as follows.
\begin{itemize}
\item
 We derive  a bound on  the approximation error of deep $\mathrm{ReLU}^2$ network in $H^1$ norm, which is of independent interest,  i.e.,
 we prove that for any $u^*\in H^2(\Omega)$, there exist a $\mathrm{ReLU}^2$ network $\bar{u}_{\bar{\phi}}$ with depth $\mathcal{D} \leq \lceil\log_2d\rceil+3,$  width $ \mathcal{W}\leq \mathcal{O}(4d\left \lceil   \frac{1}{\epsilon}-4\right \rceil^d) $
such that $$\|u^*-\bar{u}_{\bar{\phi}}\|_{H^{1}\left(\Omega\right)}\leq \epsilon.$$
\item
We establish a  bound on  the statistical error in  DRM with the tools of Pseudo dimension, especially we give an bound on
$$\mathbb{E}_{Z_i,\sigma_i, i=1,...,n}[\sup_{u_{\phi}\in \mathcal{N}^2}\frac{1}{n}|\sum_{i} \sigma_i \|\nabla u_{\phi}(Z_i)\|^2|],$$
i.e., the Rademacher complexity  of the non-Lipschitz composition of  gradient norm  and   $\mathrm{ReLU}^2$ network,   via  calculating  the
Pseudo dimension of networks with both $\mathrm{ReLU}$ and  $\mathrm{ReLU}^2$ activation functions.  We believe  that the technique we used here is helpful for bounding the statistical errors for other deep PDEs solvers where the Rademacher  complexity of non-Lipschitz composition is hard to handle.
 \item Based on the above to error bounds we establish the first   nonasymptotic convergence rate of   deep Ritz method.
 We prove that
  if we set  the depth and width
 in $\mathrm{ReLU}^2$ networks to be  $$\mathcal{D} \leq \lceil\log_2d\rceil+3, \mathcal{W}\leq \mathcal{O}(4d\left \lceil   n^{\frac{1}{d+2+\nu}}-4\right \rceil^d),$$
 the $H^1$ norm  error of DRM in expectation is
 $${\mathcal{O}} (n^{-1/(2d+4+\nu)}),$$
 where $n$ is the number of training samples on  both the domain and the boundary, $\nu$ is a positive number that can be  an arbitrary small.
Our theory shed lights  on choosing the topological structure of the employed  networks to achieve the desired convergence rate in terms of number of training samples.
\item
By comparing the known results   in nonparametric regression, where the optimal   convergence rate   in $H^1$ norm for estimating functions in $H^2$ with $n$ paired samples  is  $\mathcal{O}(n^{-\frac{1}{4+d}})$ \cite{stone1982optimal},    we conjecture that the optimal convergence rate of DRM in $H^{1}$ norm is also ${\mathcal{O}} (n^{-1/(d+4)}).$
\end{itemize}

The rest of the paper are organized as follows.  In Section 2,  we give some preliminaries.  In Section 3, we  present the detail analysis
on the convergence rate of DRM.
We give conclusion and short discussion in Section 4.

\section{Preliminaries}
Consider the following elliptic equation with Neumann boundary conditions
\begin{equation}\label{mse}
\left\{\begin{aligned}
-\triangle u +  w u &= f  \text { in } \Omega \\
 \frac{\partial u}{\partial n} &= g   \text { on } \partial \Omega, \\
\end{aligned}\right.
\end{equation}
where, $\Omega$ is a bounded open  subset of $\mathbb{R}^{d}, d>1$,  $f(x)\in L^2(\Omega)$, $w(x) \in L^{\infty}(\Omega)$ satisfying $w(x) \geq c_1 > 0 $ a.e.,
and $g(s)\in  L^2(\partial \Omega)$. Without loss of generality we assume   $\Omega = (0,1)^{d}.$
 Define
\begin{equation}\label{lossp}
\mathcal{L}(u) = \frac{1}{2}|u|_{H^{1}(\Omega)}^{2} + \frac{1}{2}\|u\|_{L^{2}(\Omega;{w})}^{2}- \langle u, f\rangle_{L^2({\Omega})} - \langle {Tu}, g\rangle_{L^2({\partial \Omega})},
\end{equation}
where $T$ is the trace operator.
\begin{Lemma}\label{minimizer}
The unique weak solution   $u^* \in H^{1}(\Omega)$ of \eqref{mse} is the unique minimizer of $\mathcal{L}(u)$ over $H^{1}(\Omega)$.
Moreover, $u^* \in H^{2}(\Omega)$.
\end{Lemma}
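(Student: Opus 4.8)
The plan is to recognize $\mathcal{L}$ as the energy of a bounded, symmetric, coercive bilinear form, invoke the classical Ritz/Lax--Milgram equivalence, and then close with elliptic regularity. First I would set
$$a(u,v):=\int_{\Omega}\nabla u\cdot\nabla v\,dx+\int_{\Omega}w\,u\,v\,dx,\qquad F(v):=\langle v,f\rangle_{L^2(\Omega)}+\langle Tv,g\rangle_{L^2(\partial\Omega)},$$
for $u,v\in H^1(\Omega)$, so that $\mathcal{L}(u)=\tfrac12 a(u,u)-F(u)$. Boundedness of $a$ follows from Cauchy--Schwarz together with $w\in L^\infty(\Omega)$; since $\Omega=(0,1)^d$ is a bounded Lipschitz domain, the trace operator $T\colon H^1(\Omega)\to L^2(\partial\Omega)$ is bounded, so $|F(v)|\le\big(\|f\|_{L^2(\Omega)}+\|T\|\,\|g\|_{L^2(\partial\Omega)}\big)\|v\|_{H^1(\Omega)}$ and $F\in(H^1(\Omega))'$. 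The essential point is coercivity, which here holds on all of $H^1(\Omega)$ without a Poincar\'e inequality precisely because the zeroth-order coefficient is bounded below:
$$a(u,u)=|u|_{H^1(\Omega)}^2+\|u\|_{L^2(\Omega;w)}^2\ \ge\ |u|_{H^1(\Omega)}^2+c_1\|u\|_{L^2(\Omega)}^2\ \ge\ \min\{1,c_1\}\,\|u\|_{H^1(\Omega)}^2.$$

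Next, by the Lax--Milgram theorem (equivalently, since $a$ is symmetric, the Riesz representation theorem) there is a unique $u^*\in H^1(\Omega)$ with $a(u^*,v)=F(v)$ for every $v\in H^1(\Omega)$; a Green's-identity computation shows this is exactly the weak form of \eqref{mse}, the Neumann datum $g$ entering naturally through the boundary integral. For the variational characterization I would expand, for $v\in H^1(\Omega)$ and $t\in\mathbb{R}$,
$$\mathcal{L}(u^*+tv)=\mathcal{L}(u^*)+t\big(a(u^*,v)-F(v)\big)+\tfrac{t^2}{2}\,a(v,v).$$
If $u^*$ solves the weak problem the linear term vanishes and $a(v,v)\ge\min\{1,c_1\}\|v\|_{H^1(\Omega)}^2>0$ for $v\neq0$, so $u^*$ is the strict global minimizer of $\mathcal{L}$ over $H^1(\Omega)$; conversely, if $u^*$ minimizes $\mathcal{L}$ then $\tfrac{d}{dt}\mathcal{L}(u^*+tv)\big|_{t=0}=0$ forces $a(u^*,v)=F(v)$ for all $v$. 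Uniqueness on both sides is inherited from coercivity (strict convexity of $\mathcal{L}$).

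It remains to upgrade the regularity to $u^*\in H^2(\Omega)$, for which I would invoke elliptic regularity theory on the convex polytope $\Omega=(0,1)^d$: for the Neumann problem with right-hand side in $L^2(\Omega)$ and suitably regular boundary data, the solution lies in $H^2(\Omega)$ together with an a priori estimate (see, e.g., Grisvard or Evans). I expect this last step to be the main obstacle --- not computationally, but in pinning down hypotheses: global $H^2$-regularity up to the boundary uses convexity (or a $C^{1,1}$ boundary) of $\Omega$ and enough smoothness of $w$ and $g$, so I would state precisely the data-regularity assumptions being used and cite the relevant theorem rather than reprove the interior and boundary estimates. Everything else --- boundedness, coercivity, Lax--Milgram, and the scalar expansion above --- is routine.
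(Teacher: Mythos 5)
Your proposal is correct and is precisely the standard Lax--Milgram/Ritz argument that the paper itself defers to with a one-line citation to Evans rather than proving anything; the bilinear form, coercivity via $w\ge c_1>0$, the quadratic expansion giving the minimization equivalence, and elliptic regularity on the convex polytope are exactly the intended route. Your instinct to flag the $H^2$ step is well-placed: with only $g\in L^2(\partial\Omega)$ as the paper assumes, global $H^2$ regularity up to the boundary does not follow (one needs roughly $g\in H^{1/2}(\partial\Omega)$, since $\partial u/\partial n|_{\partial\Omega}\in H^{1/2}(\partial\Omega)$ for $u\in H^2(\Omega)$), so the hypotheses as stated are slightly too weak for the ``Moreover'' clause and your plan to pin down and cite the precise data-regularity assumptions is the right fix.
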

\begin{proof}
Well known results, see for example \cite{evans1998partial}.
\end{proof}

A function $\mathbf{f}: \mathbb{R}^{d} \rightarrow \mathbb{R}^{N_{L}}$ implemented by a neural network is defined by
\begin{equation*}
\begin{array}{l}
\mathbf{f}_{0}(\mathbf{x})=\mathbf{x},\\
\mathbf{f}_{\ell}(\mathbf{x})=\varrho_{\ell}\left(A_{\ell} \mathbf{f}_{\ell-1}+\mathbf{b}_{\ell}\right) \quad \text { for } \ell=1, \ldots, L-1, \\
\mathbf{f}=\mathbf{f}_{L}(\mathbf{x}):=A_{L}\mathbf{f}_{L-1}+\mathbf{b}_{L},
\end{array}
\end{equation*}
where $A_{\ell}\in\mathbb{R}^{N_{\ell}\times N_{\ell-1}}$,
$\mathbf{b}_{\ell}\in\mathbb{R}^{N_{\ell}}$ and the activation function $\varrho_{\ell}$ is understood to act component-wise (note that here we allow different activation functions in different layers). $L$ is called the depth of the network and $\max\{N_{\ell},\ell=0,\cdots,L\}$ is called the width of the network. We will use $\mathcal{L}$ and $\mathcal{W}$ to denote the depth and width of  neural networks $\mathbf{f}$, respectively. $\sum_{\ell=1}^{L} N_{\ell} $ is called number of unites of $\mathbf{f}$ and  $\phi = \{A_{\ell},\mathbf{b}_{\ell}\}_{\ell}$ are called the weight parameters.  For simplicity we also use $\mathbf{f}_{\phi}$ to refer to the network.
We use $\mathcal{N}^{2}_{{\mathcal{D},\mathcal{W},\mathcal{B}}}$ to denote the set of neural networks  with  depth $\mathcal{D}$, width $\mathcal{W}$,  output  of the  function values and  their  square norm of gradients   bounded by $\mathcal{B}$,  activation function $\mathrm{ReLU}^2(x) = \max\{0,x^2\}$.
 Denote  $\mathcal{N}^{1,2}_{{\mathcal{D},\mathcal{W},\mathcal{B}}}$  as the set of neural networks  with  depth $\mathcal{D}$, width $\mathcal{W}$,  output bounded by $\mathcal{B}$,  activation functions $\mathrm{ReLU}(x) = \max\{0,x\}$ and  $\mathrm{ReLU}^2(x) = \max\{0,x^2\}$.

Obviously,
{ \begin{align*}
\mathcal{L}(u) &= |\Omega| \mathbb{E}_{{X\sim \mathrm{U}(\Omega)}}[\|\nabla u(X)\|_2^2/2 +w(X)u^2(X)/2- u(X)f(X)] \\
&-|\partial{\Omega}| \mathbb{E}_{{Y\sim \mathrm{U}(\partial\Omega)}}[{Tu}(Y)g(Y)],
\end{align*}}
where $\mathrm{U}(\Omega), \mathrm{U}(\partial\Omega)$ are the uniform distributions on $\Omega$ and $\partial \Omega$, respectively.
The main idea of  deep Ritz method (DRM) \cite{wan11} is employing
a  $u_{\phi} \in \mathcal{N}^{2}:= \mathcal{N}_{\mathcal{D},\mathcal{W},\mathcal{B}}^{2}$  to approximate the minimizer    $u^*$ of $\mathcal{L}$, i.e.,
 finding $u_{\phi}$ such that $\mathcal{L}(u_{\phi})$ closes to $ \mathcal{L}(u^*)$. To this end, by Lemma \ref{minimizer},
one may    consider the following empirical loss minimization problem
\begin{equation}\label{objsam}
\widehat{u}_{\phi}\in \min_{u_{\phi}\in \mathcal{N}^2} \widehat{\mathcal{L}}(u_{\phi}),
 \end{equation}
 where,
 \begin{align}\label{losss}
 \widehat{\mathcal{L}}(u_{\phi}) &= \frac{|\Omega|}{N}\sum_{i = 1}^N [\frac{\|\nabla u_{\phi}(X_i)\|_2^2}{2} +\frac{w(X_i)u_{\phi}^2(X_i)}{2}- u_{\phi}(X_i)f(X_i)] \nonumber \\
 & - \frac{\partial\Omega}{M} \sum_{j=1}^M [u_{\phi}(Y_j)g(Y_j)],
 \end{align}
 is a discrete version of the functional $\mathcal{L}(u_{\phi})$  with
  $\{X_i\}_{i = 1}^{N}$ being identically and independently distributed (i.i.d.) according to     $ \mathrm{U}(\Omega)$,  $\{Y_j\}_{j = 1}^{M} $ being identically and independently drawn  from  $ \mathrm{U}(\partial \Omega)$.
{ Then, we call   a (random) solver   $\mathcal{A}$}, say  SGD,  to minimize (\ref{objsam}) and  denote the  output of $\mathcal{A}$, say $u_{\phi_{\mathcal{A}}} \in \mathcal{N}$,  as the final  solution.
\section{Error Analysis}
In this section we prove the convergence rate analysis for DRM with deep $\mathrm{ReLu}^2$ networks.
The following Lemma play an important role by decoupling the total errors into three types of errors.
\begin{Lemma}\label{errdec}
\begin{align*}
 &\|u_{\phi_{\mathcal{A}}}-u^{*}\|_{H^{1}(\Omega)}^2\\
  &\leq  \frac{2}{c_1\wedge 1} [\underbrace{\frac{\|w\|_{L^{\infty}(\Omega)}\vee 1}{2}\cdot \inf_{\bar{u}\in\mathcal{N}^2}\|\bar{u}-u^{*}\|_{H^1(\Omega)}^2}_{\mathcal{E}_{app}}  +\underbrace{2 \sup_{u \in \mathcal{N}^2} |\mathcal{L}(u) - \widehat{\mathcal{L}}(u) |}_{\mathcal{E}_{sta}}+ \underbrace{\widehat{\mathcal{L}}(u_{\phi_{\mathcal{A}}}) -\widehat{\mathcal{L}}(\widehat{u}_{\phi})}_{\mathcal{E}_{opt}}].
  \end{align*}
  \end{Lemma}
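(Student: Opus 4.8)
The plan is to exploit the quadratic structure of the energy. Write $\mathcal{L}(u)=\tfrac12 a(u,u)-\ell(u)$ with the symmetric bilinear form $a(u,v)=\langle\nabla u,\nabla v\rangle_{L^2(\Omega)}+\langle wu,v\rangle_{L^2(\Omega)}$ and the linear functional $\ell(u)=\langle u,f\rangle_{L^2(\Omega)}+\langle Tu,g\rangle_{L^2(\partial\Omega)}$. By Lemma \ref{minimizer}, $u^{*}$ is the unique minimizer of $\mathcal{L}$ over $H^{1}(\Omega)$, hence it satisfies the Euler--Lagrange identity $a(u^{*},v)=\ell(v)$ for every $v\in H^{1}(\Omega)$. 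Expanding $\mathcal{L}$ around $u^{*}$ via $u=u^{*}+(u-u^{*})$ and cancelling the cross term with this identity yields the exact equality
$$\mathcal{L}(u)-\mathcal{L}(u^{*})=\tfrac12\,a(u-u^{*},u-u^{*})\qquad\text{for all }u\in H^{1}(\Omega).$$
Since $w\geq c_1>0$ a.e., $a$ is coercive and bounded on $H^{1}(\Omega)$, namely $(c_1\wedge 1)\,\|v\|_{H^{1}(\Omega)}^{2}\leq a(v,v)\leq(\|w\|_{L^{\infty}(\Omega)}\vee 1)\,\|v\|_{H^{1}(\Omega)}^{2}$. Applying the lower bound with $u=u_{\phi_{\mathcal{A}}}$ gives $\|u_{\phi_{\mathcal{A}}}-u^{*}\|_{H^{1}(\Omega)}^{2}\leq\tfrac{2}{c_1\wedge 1}\bigl(\mathcal{L}(u_{\phi_{\mathcal{A}}})-\mathcal{L}(u^{*})\bigr)$, so it suffices to bound the excess energy $\mathcal{L}(u_{\phi_{\mathcal{A}}})-\mathcal{L}(u^{*})$.

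For the excess energy I would insert an arbitrary competitor $\bar u\in\mathcal{N}^2$ and telescope:
$$\mathcal{L}(u_{\phi_{\mathcal{A}}})-\mathcal{L}(u^{*})=\bigl[\mathcal{L}(u_{\phi_{\mathcal{A}}})-\widehat{\mathcal{L}}(u_{\phi_{\mathcal{A}}})\bigr]+\bigl[\widehat{\mathcal{L}}(u_{\phi_{\mathcal{A}}})-\widehat{\mathcal{L}}(\widehat u_{\phi})\bigr]+\bigl[\widehat{\mathcal{L}}(\widehat u_{\phi})-\widehat{\mathcal{L}}(\bar u)\bigr]+\bigl[\widehat{\mathcal{L}}(\bar u)-\mathcal{L}(\bar u)\bigr]+\bigl[\mathcal{L}(\bar u)-\mathcal{L}(u^{*})\bigr].$$
The first and fourth brackets are each at most $\sup_{u\in\mathcal{N}^2}|\mathcal{L}(u)-\widehat{\mathcal{L}}(u)|$, so together they contribute $\mathcal{E}_{sta}$; the second bracket is precisely $\mathcal{E}_{opt}$; the third bracket is nonpositive because $\widehat u_{\phi}$ minimizes $\widehat{\mathcal{L}}$ over $\mathcal{N}^2$, which contains $\bar u$; and the last bracket equals $\tfrac12 a(\bar u-u^{*},\bar u-u^{*})\leq\tfrac{\|w\|_{L^{\infty}(\Omega)}\vee 1}{2}\|\bar u-u^{*}\|_{H^{1}(\Omega)}^{2}$ by the identity and the upper bound above. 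Collecting terms and then taking the infimum over $\bar u\in\mathcal{N}^2$ (legitimate since the left-hand side does not involve $\bar u$) gives $\mathcal{L}(u_{\phi_{\mathcal{A}}})-\mathcal{L}(u^{*})\leq\mathcal{E}_{app}+\mathcal{E}_{sta}+\mathcal{E}_{opt}$; multiplying by $\tfrac{2}{c_1\wedge 1}$ finishes the proof.

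This argument is essentially routine; the only points needing care are that the cross-term cancellation rests on the optimality identity $a(u^{*},v)=\ell(v)$ holding for \emph{all} $v\in H^{1}(\Omega)$, not merely for $v\in\mathcal{N}^2$ (this is exactly what Lemma \ref{minimizer} supplies), and that the competitor $\bar u$ is allowed to range over the same class $\mathcal{N}^2$ appearing in both the empirical minimization and the statistical supremum, so that the three errors $\mathcal{E}_{app},\mathcal{E}_{sta},\mathcal{E}_{opt}$ decouple cleanly. The remainder is the standard approximation/generalization/optimization splitting adapted to a coercive quadratic functional.
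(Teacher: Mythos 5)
Your proof is correct and is essentially the paper's own argument: you telescope the excess energy $\mathcal{L}(u_{\phi_{\mathcal{A}}})-\mathcal{L}(u^{*})$ through $\widehat{\mathcal{L}}$ at $u_{\phi_{\mathcal{A}}},\widehat u_{\phi},\bar u$ and back, drop the nonpositive term by empirical optimality of $\widehat u_{\phi}$, and then convert between excess energy and squared $H^1$ error via the quadratic identity $\mathcal{L}(u)-\mathcal{L}(u^{*})=\tfrac12 a(u-u^{*},u-u^{*})$ together with the two-sided bound $(c_1\wedge1)\|v\|_{H^1}^2\le a(v,v)\le(\|w\|_{L^\infty}\vee1)\|v\|_{H^1}^2$. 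The only cosmetic difference is that you package the expansion in bilinear-form notation and establish the coercivity step before telescoping rather than after; the substance and the constants are identical.
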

  \begin{proof}
For any $\bar{u}\in\mathcal{N}^2$, we have
\begin{equation*}
\begin{split}
&\mathcal{L}\left(u_{\phi_{\mathcal{A}}}\right)-\mathcal{L}\left(u^{*}\right) \\
&=\mathcal{L}\left(u_{\phi_{\mathcal{A}}}\right)-\widehat{\mathcal{L}}\left(u_{\phi_{\mathcal{A}}}\right) +\widehat{\mathcal{L}}\left(u_{\phi_{\mathcal{A}}}\right)-\widehat{\mathcal{L}}\left(\widehat{u}_{\phi}\right) +\widehat{\mathcal{L}}\left(\widehat{u}_{\phi}\right)-\widehat{\mathcal{L}}\left(\bar{u}\right) \\
&\quad+\widehat{\mathcal{L}}\left(\bar{u}\right)-\mathcal{L}\left(\bar{u}\right)+\mathcal{L}\left(\bar{u}\right)-\mathcal{L}\left(u^{*}\right) \\
&\leq \left[\mathcal{L}\left(\bar{u}\right)-\mathcal{L}\left(u^{*}\right)\right]+2 \sup _{u \in \mathcal{N}^2}\left|\mathcal{L}(u)-\widehat{\mathcal{L}}(u)\right| +\left[\widehat{\mathcal{L}}\left(u_{\phi_{\mathcal{A}}}\right)-\widehat{\mathcal{L}}\left(\widehat{u}_{\phi}\right)\right],
\end{split}
\end{equation*}
where the last step is due to the fact that $\widehat{\mathcal{L}}\left(\widehat{u}_{\phi}\right)-\widehat{\mathcal{L}}\left(\bar{u}\right)\leq 0$. Since $\bar{u}$ can be any element in $\mathcal{N}^2$, we take the infimum of $\bar{u}$ on both side of the above display,
\begin{align}
\mathcal{L}\left(u_{\phi_{\mathcal{A}}}\right)-\mathcal{L}\left(u^{*}\right)
&\leq \inf_{\bar{u}\in\mathcal{N}^2}\left[\mathcal{L}\left(\bar{u}\right)-\mathcal{L}\left(u^{*}\right)\right]+2 \sup _{u \in \mathcal{N}^2}\left|\mathcal{L}(u)-\widehat{\mathcal{L}}(u)\right| \nonumber\\ &+\left[\widehat{\mathcal{L}}\left(u_{\phi_{\mathcal{A}}}\right)-\widehat{\mathcal{L}}\left(\widehat{u}_{\phi}\right)\right].\label{errdec1}
\end{align}
Now for any $u\in\mathcal{N}^2$, set $v=u-u^*$, then
\begin{equation*}
\begin{split}
&\mathcal{L}\left(u\right)=\mathcal{L}\left(u^*+v\right)\\
&= \frac{1}{2}(\nabla (u^*+v),\nabla (u^*+v))_{L^{2}(\Omega)}+\frac{1}{2}(u^*+v,u^*+v)_{L^{2}(\Omega;w)}\\
&\quad\ -\langle u^*+v, f\rangle_{L^2({\Omega})} - \langle {Tu^*+Tv}, g\rangle_{L^2({\partial \Omega})}\\
&=\frac{1}{2}(\nabla u^*,\nabla u^*)_{L^{2}(\Omega)}+\frac{1}{2}(u^*,u^*)_{L^{2}(\Omega;w)}-\langle u^*, f\rangle_{L^2({\Omega})} - \langle {Tu^*}, g\rangle_{L^2({\partial \Omega})}\\
&\quad\ +\frac{1}{2}(\nabla v,\nabla v)_{L^{2}(\Omega)}+\frac{1}{2}(v,v)_{L^{2}(\Omega;w)}\\
&\quad\ +\left[(\nabla u^*,\nabla v)_{L^{2}(\Omega)}+(u^*,v)_{L^{2}(\Omega;w)}-\langle v, f\rangle_{L^2({\Omega})} - \langle {Tv}, g\rangle_{L^2({\partial \Omega})}\right]\\
&=\mathcal{L}\left(u^*\right)+\frac{1}{2}(\nabla v,\nabla v)_{L^{2}(\Omega)}+\frac{1}{2}(v,v)_{L^{2}(\Omega;w)},
\end{split}
\end{equation*}
where the last equality is due to the fact that $u^*$ is the weak solution of equation $(\ref{mse})$. Hence
\begin{align*}
\frac{c_1\wedge 1}{2}\|v\|_{H^1(\Omega)}^2\leq\mathcal{L}\left(u\right)-\mathcal{L}\left(u^*\right)
&=\frac{1}{2}(\nabla v,\nabla v)_{L^{2}(\Omega)}+\frac{1}{2}(v,v)_{L^{2}(\Omega;w)}\\
&\leq\frac{\|w\|_{L^{\infty}(\Omega)}\vee 1}{2}\|v\|_{H^1(\Omega)}^2,
\end{align*}
that is,
\begin{equation} \label{errdec2}
\frac{c_1\wedge 1}{2}\|u-u^*\|_{H^1(\Omega)}^2\leq\mathcal{L}\left(u\right)-\mathcal{L}\left(u^*\right)
\leq\frac{\|w\|_{L^{\infty}(\Omega)}\vee 1}{2}\|u-u^*\|_{H^1(\Omega)}^2.
\end{equation}
Combining $(\ref{errdec1})$ and $(\ref{errdec2})$ yields
\begin{align*}
&\|u_{\phi_{\mathcal{A}}}-u^{*}\|_{H^1(\Omega)}^2\\
&\leq \frac{2}{c_1\wedge 1}[\frac{\|w\|_{L^{\infty}(\Omega)}\vee 1}{2}\cdot \inf_{\bar{u}\in\mathcal{N}^2}\|\bar{u}-u^{*}\|_{H^1(\Omega)}^2\\
&+2 \sup _{u \in \mathcal{N}^2}\left|\mathcal{L}(u)-\widehat{\mathcal{L}}(u)\right| +\left[\widehat{\mathcal{L}}\left(u_{\phi_{\mathcal{A}}}\right)-\widehat{\mathcal{L}}\left(\widehat{u}_{\phi}\right)\right]].
\end{align*}
  \end{proof}

    The approximation  error $\mathcal{E}_{app}$  describes the expressive power of the  $\mathrm{ReLU}^2$ networks  $\mathcal{N}^2$ in $H^1$ norm, which  corresponds to the approximation error   in  FEM  known as the C$\acute{e}$a's lemma \cite{ciarlet2002finite}.
      The statistical error $\mathcal{E}_{sta}$  is caused by the monte carlo discritization of $\mathcal{L}(\cdot)$ defined in (\ref{lossp}) with $\widehat{\mathcal{L}}(\cdot)$ in (\ref{losss}).
    While, the optimization error $\mathcal{E}_{opt}$ indicates the perforce of the solver $\mathcal{A}$ we utilized. In contrast, this error is  is corresponding to the error of solving the linear systems in FEM.
    In this paper we focus on the first tow errors, i.e, considering the scenario of perfect training with $\mathcal{E}_{opt}=0$.
  \subsection{Approximation error}
The current literature on network  approximation theory  are mainly focus on the $L^p, p\in[1,+\infty]$ norm for  deep  networks \cite{yarotsky2017error,suzuki2018adaptivity,shen2019deep,montanelli2019new,schmidt2019deep,li2019better,siegel2020approximation}.
  The  approximation error of ReLU network in Sobolev norm  are considered in \cite{guhring2020error, dung2020sparse}. However, the ReLU network  may not be
  suitable for solving PDEs since the term     $\nabla u_{\phi}$  in the loss function   will become   piece-wise constant  with respect to $\phi$,   which will prohibit using SGD for training.
  In this section  we derive an upper bound on the approximation error  $\mathrm{ReLU}^2$ networks  $\mathcal{N}^2$ in $H^1$ norm, which is of independent interest.
  \begin{Theorem}\label{errapp}
  Assume $\|u^*\|_{ H^{2}(\Omega)}\leq c_2$, then there exist an $\mathrm{ReLU}^2$ network $\bar{u}_{\bar{\phi}} \in \mathcal{N}^2$ with
$$\mathcal{D} \leq \lceil\log_2d\rceil+3, \ \ \mathcal{W}\leq 4d\left \lceil   \frac{Cc_2}{\epsilon}-4\right \rceil^d $$
such that $$\mathcal{E}_{app} \leq \frac{\|w\|_{L^{\infty}(\Omega)}\vee 1}{2}\|u^*-\bar{u}_{\bar{\phi}}\|_{H^{1}\left(\Omega\right)}^2 \leq \frac{\|w\|_{L^{\infty}(\Omega)}\vee 1}{2}\epsilon^2,$$
where $C$ is a constant depending  only on  $d$.
  \end{Theorem}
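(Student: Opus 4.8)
The plan is to realize $\bar u_{\bar\phi}$ as a tensor-product quadratic-spline quasi-interpolant of $u^*$ on a uniform mesh, and to show that this quasi-interpolant is \emph{exactly} a $\mathrm{ReLU}^2$ network of the advertised size. Three elementary identities for the activation $\mathrm{ReLU}^2(x)=\mathrm{ReLU}(x)^2$ drive the construction: (i) $x^2=\mathrm{ReLU}^2(x)+\mathrm{ReLU}^2(-x)$, so one hidden layer of width $2$ computes $x\mapsto x^2$ exactly; (ii) $xy=\tfrac{1}{4}\big[\mathrm{ReLU}^2(x+y)+\mathrm{ReLU}^2(-x-y)-\mathrm{ReLU}^2(x-y)-\mathrm{ReLU}^2(y-x)\big]$, so one hidden layer of width $4$ computes a product exactly, hence a balanced binary tree computes a product of $m$ numbers with depth $\lceil\log_2 m\rceil$ and width $O(m)$; and (iii) on a bounded interval $x=\tfrac{1}{4c}\big[\mathrm{ReLU}^2(cx+1)-\mathrm{ReLU}^2(-cx+1)\big]$ for $|x|\le 1/c$, so any affine function can be propagated through the layers without distortion. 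Note that every $\mathrm{ReLU}^2$ network is globally $C^1$; this is exactly why a piecewise-linear hat-function partition of unity cannot be represented and we are forced to the $C^1$, minimal-degree, quadratic B-spline partition of unity.

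Next I would fix $N=\lceil Cc_2/\epsilon-4\rceil$, $h=1/N$, and split $\Omega=(0,1)^d$ into $N^d$ subcubes. Let $\{\psi_\alpha\}$ be the tensor-product quadratic B-spline partition of unity on this mesh, $\psi_\alpha(x)=\prod_{j=1}^d B_{\alpha_j}(x_j)$ with each $B_{\alpha_j}$ a fixed quadratic spline supported on at most three consecutive cells (hence an affine combination of at most four shifted $\mathrm{ReLU}^2$ bumps of the single variable $x_j$), $\sum_\alpha\psi_\alpha\equiv 1$ on $\Omega$, and each $\supp\psi_\alpha$ overlapping only $O(1)$ (dimension-dependent) others. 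Let $p_\alpha$ be the degree-$\le 1$ averaged Taylor polynomial of $u^*$ on $\supp\psi_\alpha$ (the average, not pointwise values, so that $p_\alpha$ is defined for merely $H^2$ data), and set $\bar u_{\bar\phi}:=\sum_\alpha p_\alpha\psi_\alpha$. A cell-by-cell Bramble--Hilbert (Dupont--Scott) estimate \cite{brenner2007mathematical}, using that this operator reproduces affine functions locally, gives $\|u^*-\bar u_{\bar\phi}\|_{H^1(\Omega)}\le C'(d)\,h\,\|u^*\|_{H^2(\Omega)}\le C'(d)c_2 h$; choosing the constant $C$ in the statement at least $C'(d)$ makes the right side $\le\epsilon$, and inserting this into the definition of $\mathcal{E}_{app}$ in Lemma~\ref{errdec} gives the claimed bound.

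It remains to count the realizing network. One hidden layer, of width at most $4d$ per cell index $\alpha$, produces all the factors $B_{\alpha_j}(x_j)$ and, via (iii), the copies of the coordinates $x_j$ needed to assemble the affine $p_\alpha$; the crude construction that does not share bumps between cells has width $\le 4d\,N^d$ here. Then $\lceil\log_2 d\rceil$ layers form, in parallel over $\alpha$, the products $\psi_\alpha=\prod_{j=1}^d B_{\alpha_j}(x_j)$; one further layer multiplies each $\psi_\alpha$ by the affine $p_\alpha$; and a final linear layer outputs $\sum_\alpha p_\alpha\psi_\alpha$. Hence depth $\le 1+\lceil\log_2 d\rceil+1+1=\lceil\log_2 d\rceil+3$ and width $\le 4d\lceil Cc_2/\epsilon-4\rceil^d$, the $-4$ being the harmless correction from the B-splines that hang over $\partial\Omega$. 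Finally, the quasi-interpolation coefficients and the fixed B-splines are uniformly controlled, so $\|\bar u_{\bar\phi}\|_{L^\infty}$ and $\|\nabla\bar u_{\bar\phi}\|_{L^\infty}$ are bounded by a constant depending only on $d$ and $c_2$; taking the free parameter $\mathcal{B}$ at least this large puts $\bar u_{\bar\phi}\in\mathcal{N}^2_{\mathcal{D},\mathcal{W},\mathcal{B}}$.

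The part I expect to be delicate is the bookkeeping that pins depth and width to the exact stated values rather than merely $O(\cdot)$ of them: keeping the depth at $\lceil\log_2 d\rceil+3$ forces all univariate B-spline pieces into a single layer and the multiplication by $p_\alpha$ into exactly one extra layer, and keeping the width at $4d\lceil Cc_2/\epsilon-4\rceil^d$ requires careful treatment of the boundary B-splines and of which quantities the product tree must carry forward unchanged. By contrast, the analytic ingredient --- the $O(h)\|u^*\|_{H^2}$ bound with a constant depending only on $d$ --- is standard finite-element approximation theory and should be routine.
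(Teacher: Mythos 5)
Your proposal is correct and rests on the same structural observation as the paper's proof: tensor-product quadratic (order-$3$) B-splines are exactly representable by $\mathrm{ReLU}^2$ units via the squaring identity and the one-layer product gadget, with a balanced binary product tree over the $d$ coordinates supplying the $\lceil\log_2 d\rceil$ multiplication layers. Where you diverge is the analytic ingredient and the shape of the approximant. The paper uses the constant-coefficient expansion $\bar u_{\bar\phi}=\sum_{\mathbf i}c_{\mathbf i}\,N^{(3)}_{l,\mathbf i}$ and imports the $H^1$ bound $\lesssim 2^{-l}\|u^*\|_{H^2}$ wholesale from Schultz's spline-approximation theorem (Lemma~\ref{spapp}), so the final linear combination costs nothing beyond the output layer. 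You instead build a concrete quasi-interpolant $\sum_\alpha p_\alpha\psi_\alpha$ with affine averaged-Taylor pieces $p_\alpha$ and derive the $O(h)\|u^*\|_{H^2}$ rate yourself via Bramble--Hilbert, which is more self-contained and uses only standard finite-element machinery, but it makes the coefficients $x$-dependent: you must thread the coordinates $x_j$ through the product tree with your identity (iii) and spend one genuinely additional layer to form $p_\alpha\psi_\alpha$ before summing. Both routes nevertheless reach $\mathcal{D}\le\lceil\log_2 d\rceil+3$ and $\mathcal{W}\lesssim 4dN^d$. A B-spline quasi-interpolant with \emph{scalar} dual functionals, $\sum_\alpha\lambda_\alpha(u^*)\psi_\alpha$, still reproducing degree-one polynomials, would let you keep the self-contained Bramble--Hilbert analysis while matching the paper's constant-coefficient form and dispensing with the extra product layer and coordinate threading.
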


\begin{proof}
Our proof is based on some classical  approximation results of  B-splines \cite{schumaker2007spline,de1978practical}.
Let us recall some notation and useful results.
We denote by $\pi_{l}$ the dyadic partition of $[0,1]$, i.e.,
\begin{equation*}
\pi_{l}: t_{0}^{(l)}=0<t_{1}^{(l)}<\cdots<t_{2^{l}-1}^{(l)}<t_{2^{l}}^{(l)}=1,
\end{equation*}
where $t_{i}^{(l)}=i \cdot 2^{-l}(0\leq i\leq 2^l)$. The cardinal B-spline of order $3$ with respect to partition $\pi_l$ is defined by
\begin{equation*}
N_{l, i}^{(3)}(x)=(-1)^k\left[t_{i}^{(l)}, \ldots, t_{i+3}^{(l)},(x-t)_{+}^{2}\right] \cdot\left(t_{i+3}^{(l)}-t_{i}^{(l)}\right),\quad i=-2,\cdots,2^l-1
\end{equation*}
which can be rewritten in the following equivalent form,
\begin{equation} \label{B-spline expression}
N_{l, i}^{(3)}(x)=2^{2l-1}\sum_{j=0}^{3}(-1)^{j}\left(\begin{array}{l}
3 \\
j
\end{array}\right)(x-i2^{-l}-j2^{-l})_{+}^{2},\quad i=-2,\cdots,2^l-1.
\end{equation}
The multivariate cardinal B-spline of order $3$ is defined by the product of univariate cardinal B-splines of order $3$, i.e.,
\begin{equation*}
{N}_{l, \mathbf{i}}^{(3)}(\mathbf{x})=\prod_{j=1}^{d} N_{l, i_j}^{(3)}\left(x_{j}\right), \quad \mathbf{i}=\left(i_{1}, \ldots, i_{d}\right),-3<i_{j}<2^{l}.
\end{equation*}
Denote
\begin{equation*}
S_{l}^{(3)}([0,1]^d)=\text{span}\{N_{l,\mathbf{i}}^{(3)},-3<i_j<2^l,j=1,2,\cdots,d\}.
\end{equation*}
Then, the element $f$ in $S_{l}^{(3)}([0,1]^d)$ are    piecewise  polynomial functions according to to partition $\pi_l^d$   with each piece being  degree $2$ and in $C^{1}([0,1]^d)$. Since
\begin{equation*}
S_1^{(3)}\subset S_2^{(3)}\subset S_3^{(3)}\subset\cdots,
\end{equation*}
We can further denote
\begin{equation*}
S^{(3)}([0,1]^d)=\bigcup_{l=1}^{\infty} S_{l}^{(3)}([0,1]^d).
\end{equation*}
 The following approximation result  of cardinal B-splines in Sobolev spaces which is a direct consequence of theorem 3.4 in  \cite{schultz1969approximation}  play an important role in the  proof of this Theorem.
\begin{Lemma}\label{spapp}
 Assume  $u^*\in H^2([0,1]^d)$, there exists $\{c_j\}_{j=1}^{(2^l-4)^d}\subset\mathbb{R}$ with $l>2$ such that
\begin{equation*}
\|u^*-\sum_{j=1}^{(2^l-4)^d}c_j{N}_{l, \mathbf{i}_j}^{(3)}\|_{H^{1}(\Omega)}\leq \frac{C}{2^l}\|u^*\|_{H^1(\Omega)},
\end{equation*}
where $C$ is a constant only depend on $d$.
\end{Lemma}

\begin{Lemma}\label{spasrelu2}
The multivariate B-spline ${N}_{l, \mathbf{i}}^{(3)}(\mathbf{x})$ can be implemented exactly by a $\mathrm{ReLU}^2$  network with depth  $\lceil\log_2d\rceil+2$ and width $4d$.
\end{Lemma}
\begin{proof}
Denote
\begin{equation*}
\sigma (x)=\left\{\begin{array}{ll}
x^2, & x\geq 0 \\
0, & \text{else}
\end{array}\right.
\end{equation*}
as the activation function in $\mathrm{ReLU}^2$  network.
By definition of $N_{l, i}^{(3)}(x)$ in (\ref{B-spline expression}),  it's clear that $N_{l, i}^{(3)}(x)$ can be implemented by  $\mathrm{ReLU}^2$  network  without any error with depth  $2$ and width $4$. On the other hand  $\mathrm{ReLU}^2$  network  can also realize  multiplication without any error. In fact, for any $x,y\in\mathbb{R}$,
\begin{equation*}
xy = \frac{1}{4}[(x+y)^2-(x-y)^2]=\frac{1}{4}[\sigma (x+y)+\sigma (-x-y)-\sigma (x-y)-\sigma (y-x)].
\end{equation*}
Hence multivariate B-spline of order $3$  can be implemented by  $\mathrm{ReLU}^2$  network exactly with depth  $\lceil\log_2d\rceil+2$ and width $4d$.
\end{proof}

For any $\epsilon>0$, by Lemma \ref{spapp} and \ref{spasrelu2} with $\frac{1}{2^l} \leq \left \lceil \frac{C\|u^*\|_{H^2}}{\epsilon}\right \rceil$,
  there exists $\bar{u}_{\bar{\phi}} \in \mathcal{N}^2$, such that
\begin{equation} \label{H1 b-spline}
\left\|u^*-\bar{u}_{\bar{\phi}} \right\|_{H^1(\Omega)} \leq \epsilon.
\end{equation}
The depth $\mathcal{D}$ and width $\mathcal{W}$ of $\bar{u}_{\bar{\phi}}$ are satisfying   $\mathcal{D}\leq\lceil\log_2d\rceil+3$ and $\mathcal{W}\leq 4dn=4d\left \lceil   \frac{C\|u^*\|_{H^2}}{\epsilon}-4\right \rceil^d$, respectively.
\end{proof}

\subsection{Statistical error}
  In this section, we   bound the statistical error $$ {\mathcal{E}_{sta}}  = 2\sup_{u \in \mathcal{N}^2} |\mathcal{L}(u) - \widehat{\mathcal{L}}(u) |.$$
  \begin{Lemma}\label{errstadec}
  $$\sup_{u \in \mathcal{N}^2} |\mathcal{L}(u) - \widehat{\mathcal{L}}(u) | \leq \sum_{j=1}^4 \sup_{u \in \mathcal{N}^2} |\mathcal{L}_{j}(u) - \widehat{\mathcal{L}_{j}}(u)|,$$ where,
  $$\mathcal{L}_{1}(u) = |\Omega| \mathbb{E}_{{X\sim \mathrm{U}(\Omega)}}[w(X)u^2(X)/2], \ \  \widehat{\mathcal{L}_1}(u) = \frac{|\Omega|}{N}\sum_{i = 1}^N [\frac{w(X_i)u^2(X_i)}{2}],$$
  $$\mathcal{L}_{2}(u) = |\Omega| \mathbb{E}_{{X\sim \mathrm{U}(\Omega)}}[u(X)f(X)], \ \  \widehat{\mathcal{L}_2}(u) = \frac{|\Omega|}{N}\sum_{i = 1}^N [ u(X_i)f(X_i)],$$
  $$\mathcal{L}_{3}(u) = |\partial{\Omega}| \mathbb{E}_{{Y\sim \mathrm{U}(\partial\Omega)}}[{Tu}(Y)g(Y)], \ \  \widehat{\mathcal{L}_3}(u) = \frac{\partial\Omega}{M} \sum_{j=1}^M [u(Y_j)g(Y_j)],$$
   $$\mathcal{L}_{4}(u) = |\Omega| \mathbb{E}_{{X\sim \mathrm{U}(\Omega)}}[\|\nabla u(X)\|_2^2/2], \ \
   \widehat{\mathcal{L}_4}(u) = \frac{|\Omega|}{N}\sum_{i = 1}^N [\frac{\|\nabla u(X_i)\|_2^2}{2}].$$
  \end{Lemma}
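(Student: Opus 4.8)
The plan is to observe that $\mathcal{L}$ and $\widehat{\mathcal{L}}$ split, term by term, into precisely the four pieces listed in the statement. From the expectation form of $\mathcal{L}$ recorded just before \eqref{objsam} one reads off the identity
$$\mathcal{L}(u) = \mathcal{L}_4(u) + \mathcal{L}_1(u) - \mathcal{L}_2(u) - \mathcal{L}_3(u),$$
and comparing \eqref{losss} with the definitions of $\widehat{\mathcal{L}_1},\dots,\widehat{\mathcal{L}_4}$ gives the matching identity for the empirical functional,
$$\widehat{\mathcal{L}}(u) = \widehat{\mathcal{L}_4}(u) + \widehat{\mathcal{L}_1}(u) - \widehat{\mathcal{L}_2}(u) - \widehat{\mathcal{L}_3}(u).$$
Subtracting the two and applying the triangle inequality pointwise in $u$ then yields
$$|\mathcal{L}(u) - \widehat{\mathcal{L}}(u)| \le \sum_{j=1}^4 |\mathcal{L}_j(u) - \widehat{\mathcal{L}_j}(u)| \qquad \text{for every } u \in \mathcal{N}^2.$$

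Next I would take the supremum over $u \in \mathcal{N}^2$ on both sides and use the elementary fact that the supremum of a finite sum of nonnegative functions does not exceed the sum of the individual suprema:
$$\sup_{u \in \mathcal{N}^2} \sum_{j=1}^4 |\mathcal{L}_j(u) - \widehat{\mathcal{L}_j}(u)| \le \sum_{j=1}^4 \sup_{u \in \mathcal{N}^2} |\mathcal{L}_j(u) - \widehat{\mathcal{L}_j}(u)|.$$
Chaining this with the pointwise bound above gives exactly the claimed inequality. Nothing beyond the integrability of the four terms (guaranteed by $f \in L^2(\Omega)$, $g \in L^2(\partial\Omega)$, $w \in L^\infty(\Omega)$, and the output/gradient bound $\mathcal{B}$ built into $\mathcal{N}^2$) is used at this stage; in particular no regularity of $u$ is invoked.

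I should be candid that this lemma is a pure bookkeeping step and carries no real difficulty; its purpose is to reduce the control of $\mathcal{E}_{sta}$ to four separate uniform deviation estimates over $\mathcal{N}^2$. The genuine obstacle is deferred to those four estimates, and in particular to the one built from $\mathcal{L}_4$: since $u \mapsto \|\nabla u\|_2^2$ is a non-Lipschitz composition applied to the gradient of a $\mathrm{ReLU}^2$ network, its Rademacher complexity cannot be handled by the usual contraction argument, and one instead has to pass through the Pseudo-dimension of networks carrying both $\mathrm{ReLU}$ and $\mathrm{ReLU}^2$ units, as announced in the introduction. The three remaining terms, coming from $\mathcal{L}_1,\mathcal{L}_2,\mathcal{L}_3$, are comparatively standard once the function values of the networks in $\mathcal{N}^2$ are bounded by $\mathcal{B}$.
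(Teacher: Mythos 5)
Your proof is correct and follows exactly the route the paper takes: decompose $\mathcal{L}-\widehat{\mathcal{L}}$ into the four matching pieces, apply the triangle inequality pointwise, and then use $\sup\sum\le\sum\sup$ --- the paper simply compresses all of this into ``direct consequence of the triangle inequality.'' Your additional remarks about where the real difficulty lies (handling $\mathcal{L}_4$ via Pseudo-dimension) accurately reflect the structure of the rest of the section.
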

  \begin{proof}
  This Lemma holds by the direct consequence of triangle inequality.
  \end{proof}

  By Lemma \ref{errstadec}, we have to bound the the maximum value of four random processes
  indexed by $u \in \mathcal{N}^2$. To this end, we recall tools in empirical process \cite{wellner,ledoux2013probability}.
   Denote
   \begin{equation}\label{boundb}
   \mathcal{B} = \max\{\|\bar{u}_{\bar{\phi}}\|_{L^{\infty}(\Omega)},\|\|\nabla \bar{u}_{\bar{\phi}}\|_2^2\|_{L^{\infty}(\Omega)}\},
   \end{equation}
   where $\bar{u}_{\bar{\phi}}$ is the best approximation of $u^*$ in Theorem \ref{errapp}.
  { Let  $f,g,w$ be  bounded, say by some constant  $c_3$, i.e, we  assume that  $$\|f\|_{L^{\infty}(\Omega)}\vee\|w\|_{L^{\infty}(\Omega)}\vee\|g\|_{L^{\infty}(\partial\Omega)} \vee \mathcal{B}  \leq c_3 < \infty.$$
We use $\mu$ to denote  $ \mathrm{U}(\Omega) (\mathrm{U}(\partial \Omega))$.
 Given $n = N(M)$  i.i.d samples $\mathbf{Z}_n = \{Z_i\}_{i=1}^n$  from $\mu$, with $Z_i = X_i (Y_i) \sim \mu$, we need the following Rademacher complexity to measure the capacity of the given function class $\mathcal{N}$ restricted on $n$ random samples $\mathbf{Z}_n$.
 \begin{Definition}
 The Rademacher complexity of a set $A \subseteq \mathrm{R}^n$ is defined as
  $$\mathfrak{R}(A) = \mathbb{E}_{\mathbf{Z}_n,\Sigma_n}[\sup_{a\in A}\frac{1}{n}|\sum_{i} \sigma_i a_i|],$$
  where,   $\Sigma_n = \{\sigma_i\}_{i=1}^n$ are $n$ i.i.d  Rademacher variables with $\mathbb{P}(\sigma_i = 1) = \mathbb{P}(\sigma_i = -1) = \frac{1}{2}.$
  The Rademacher complexity of  function class $\mathcal{N}$ associate with random sample $\mathbf{Z}_n$ is defined as as $$\mathfrak{R}(\mathcal{N}) = \mathbb{E}_{\mathbf{Z}_n,\Sigma_n}[\sup_{u\in \mathcal{N}}\frac{1}{n}|\sum_{i} \sigma_i u(Z_i)|].$$
  \end{Definition}
\begin{Lemma}\label{lembound}
 Let $\Psi_1(x,y) = \frac{w(x)y^2}{2}: \mathbb{R}^d\times \mathbb{R}, |y|\leq c_3$, $\Psi_2(x,y) = f(x)y : \mathbb{R}^d\times \mathbb{R}, |y|\leq c_3$, $  \Psi_3(x,y) = g(x)y: \mathbb{R}^d\times \mathbb{R}, |y|\leq c_3 $.  Then $\Psi_1(x,y)$, $\Psi_2(x,y)$  and $\Psi_3(x,y)$ are  $c_3^2$, $c_3$ and $c_3$-Lipschitz continuous on $y$  for all $x$ and $\Psi_j(x,0) =0, j=1,2,3$.
\end{Lemma}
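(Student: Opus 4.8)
The plan is to check all three assertions directly from the definitions, the only inputs being the uniform bounds $\|w\|_{L^{\infty}(\Omega)}\vee\|f\|_{L^{\infty}(\Omega)}\vee\|g\|_{L^{\infty}(\partial\Omega)}\le c_3$ and the a priori constraint $|y|\le c_3$ on the second argument. That the three functions vanish at $y=0$ is immediate: each of $\Psi_1,\Psi_2,\Psi_3$ is of the form (a coefficient depending only on $x$) times (a monomial in $y$ with no constant term), so $\Psi_j(x,0)=0$ for $j=1,2,3$.

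For the Lipschitz estimates I would fix $x$ and take $y_1,y_2$ with $|y_1|,|y_2|\le c_3$. For the quadratic term, factoring the difference of squares gives
$$|\Psi_1(x,y_1)-\Psi_1(x,y_2)|=\frac{|w(x)|}{2}\,|y_1+y_2|\,|y_1-y_2|\le \frac{c_3}{2}\cdot 2c_3\cdot|y_1-y_2|=c_3^2\,|y_1-y_2|,$$
where I used $|w(x)|\le c_3$ and $|y_1+y_2|\le|y_1|+|y_2|\le 2c_3$; this yields the claimed $c_3^2$-Lipschitz constant. The remaining two maps are linear in $y$, hence
$$|\Psi_2(x,y_1)-\Psi_2(x,y_2)|=|f(x)|\,|y_1-y_2|\le c_3\,|y_1-y_2|,\qquad |\Psi_3(x,y_1)-\Psi_3(x,y_2)|=|g(x)|\,|y_1-y_2|\le c_3\,|y_1-y_2|,$$
which are the asserted $c_3$-Lipschitz bounds, uniform in $x$.

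There is no genuine obstacle here; the only point worth flagging is that $y\mapsto y^2$ is merely locally Lipschitz, so the uniform bound $|y|\le c_3$ built into the definition of $\mathcal{N}^2$ (through $\mathcal{B}\le c_3$ in \eqref{boundb}) is what makes the estimate for $\Psi_1$ go through, by controlling $|y_1+y_2|$. I would state the lemma in exactly this form because its role downstream is to permit an application of the Rademacher contraction (Talagrand) inequality to $\mathcal{L}_1,\mathcal{L}_2,\mathcal{L}_3$, thereby reducing the statistical error of these three terms to the Rademacher complexity of $\mathcal{N}^2$ itself (to be bounded later via Pseudo-dimension); only the Lipschitz constants in $y$ and the normalization $\Psi_j(x,0)=0$ enter that reduction.
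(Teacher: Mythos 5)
Your proof is correct and follows essentially the same route as the paper: factor the difference of squares for $\Psi_1$, use $|w(x)|\le c_3$ and $|y_1+y_2|\le 2c_3$, and observe linearity for $\Psi_2,\Psi_3$ (which the paper merely says can be shown similarly). The extra remarks about why the uniform bound $|y|\le c_3$ is needed and the downstream role of the lemma are accurate but not part of the paper's argument.
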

\begin{proof}
We give  the proof for $\Psi_1$ and omit the details for $\Psi_2, \Psi_3$ since they can be shown similarly.
For arbitrary $y_1, y_2 $ with $|y_i|\leq c_3, i=1,2$,
\begin{align*}
|\Psi_1(x, y_1) - \Psi_1(x, y_2)| &= |\frac{w(x)y_{1}^2}{2} - \frac{w(x)y_{2}^2}{2}|\\
& = \frac{|w(x)(y_1+y_2)|}{2} |y_1-y_2| \leq c_3^2 |y_1(x)-y_2|.
\end{align*}
\end{proof}

 By Corollary 3.17 in \cite{ledoux2013probability} and Lemma \ref{lembound},    we have the following
 Lipschitz  contraction results on Rademacher complexity.
\begin{Lemma}\label{Redm1}
Let $\mathcal{N} = \{u(x): \|u\|_{L^{\infty}(\Omega)} \leq c_3\}$.
Define,
 $$\Psi_j \circ \mathcal{N}= \{\mathrm{composition \ \ of }  \ \  \Psi_j \ \  \mathrm{and} \ \ \mathcal{N}: x \rightarrow \Psi_j (x,u(x)): u\in \mathcal{N} \}, j =1,2,3.$$
 Then,
    $\mathfrak{R}(\Psi_1 \circ \mathcal{N}) \leq 2 c_3^2\mathfrak{R}(\mathcal{N}),$
     $\mathfrak{R}(\Psi_i \circ \mathcal{N}) \leq 2 c_3\mathfrak{R}(\mathcal{N}), i = 2,3$.
\end{Lemma}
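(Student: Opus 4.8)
The plan is to apply a Lipschitz contraction principle for Rademacher complexities to each of the three composition maps $\Psi_j \circ \mathcal{N}$, using the structural facts already established in Lemma \ref{lembound}: namely that each $\Psi_j(x,\cdot)$ is Lipschitz in its second argument (with constants $c_3^2$, $c_3$, $c_3$ respectively) uniformly in $x$, and that each satisfies $\Psi_j(x,0) = 0$. The vanishing-at-zero property is exactly what is needed so that the contraction inequality can be applied in the ``centered'' form where the Lipschitz constant appears as a plain multiplicative factor rather than forcing in an additive term.

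Concretely, I would proceed as follows. First, fix the samples $\mathbf{Z}_n = \{Z_i\}_{i=1}^n$ and recall that by definition
\[
\mathfrak{R}(\Psi_j \circ \mathcal{N}) = \mathbb{E}_{\mathbf{Z}_n,\Sigma_n}\Big[\sup_{u\in\mathcal{N}}\frac{1}{n}\Big|\sum_i \sigma_i \Psi_j(Z_i, u(Z_i))\Big|\Big].
\]
Next, for each fixed $i$, consider the one-variable function $\varphi_{j,i}(t) := \Psi_j(Z_i, t)$ on the interval $\{|t|\le c_3\}$; by Lemma \ref{lembound} this is $L_j$-Lipschitz with $L_1 = c_3^2$, $L_2 = L_3 = c_3$, and $\varphi_{j,i}(0) = 0$. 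Then I would invoke Corollary 3.17 of \cite{ledoux2013probability} (the cited comparison/contraction theorem for Rademacher averages with contractions vanishing at the origin), applied to the set $A = \{(u(Z_1),\dots,u(Z_n)) : u \in \mathcal{N}\} \subseteq \mathbb{R}^n$ and the coordinatewise contractions $\varphi_{j,i}/L_j$. That corollary yields
\[
\mathbb{E}_{\Sigma_n}\Big[\sup_{u\in\mathcal{N}}\Big|\sum_i \sigma_i \varphi_{j,i}(u(Z_i))\Big|\Big] \le 2 L_j\, \mathbb{E}_{\Sigma_n}\Big[\sup_{u\in\mathcal{N}}\Big|\sum_i \sigma_i u(Z_i)\Big|\Big],
\]
where the factor $2$ comes from the absolute values inside the supremum (the version of the contraction inequality that controls $\sup|\cdot|$ rather than $\sup(\cdot)$). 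Finally, I would divide by $n$ and take expectation over $\mathbf{Z}_n$, which commutes past the already-established inequality, giving $\mathfrak{R}(\Psi_j\circ\mathcal{N}) \le 2 L_j\, \mathfrak{R}(\mathcal{N})$; substituting the three Lipschitz constants produces the stated bounds $\mathfrak{R}(\Psi_1\circ\mathcal{N}) \le 2c_3^2\,\mathfrak{R}(\mathcal{N})$ and $\mathfrak{R}(\Psi_i\circ\mathcal{N})\le 2c_3\,\mathfrak{R}(\mathcal{N})$ for $i=2,3$.

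The only genuinely delicate point is bookkeeping around the absolute value: the classical Ledoux--Talagrand contraction lemma is usually stated for $\mathbb{E}\sup_a \sum_i \sigma_i \varphi_i(a_i)$ without the inner absolute value, and passing to $\mathbb{E}\sup_a |\sum_i \sigma_i \varphi_i(a_i)|$ is what introduces the constant $2$ (one bounds $|\cdot|$ by a symmetrized sup, or equivalently applies the one-sided result to $\mathcal{N}$ and $-\mathcal{N}$). The cited Corollary 3.17 is precisely the form that packages this, together with the $\varphi_i(0)=0$ hypothesis, so the argument is essentially a direct citation once the reduction to coordinatewise contractions is made; no nontrivial estimation is required beyond what Lemma \ref{lembound} already supplies. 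I would also note in passing that the hypothesis $\mathcal{N} = \{u : \|u\|_{L^\infty(\Omega)} \le c_3\}$ guarantees $|u(Z_i)| \le c_3$ so that the Lipschitz bounds of Lemma \ref{lembound} are genuinely applicable on the relevant range of the second argument.
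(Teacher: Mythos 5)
Your proposal is correct and matches the paper's approach: the paper proves this lemma by directly invoking Corollary 3.17 of Ledoux--Talagrand together with the Lipschitz constants and the vanishing-at-zero property established in Lemma~\ref{lembound}, exactly as you do. Your additional remarks about the role of the absolute value and the factor of $2$ are a helpful unpacking of why that particular corollary (rather than the one-sided contraction lemma) is the right reference, but substantively you and the paper are using the same argument.
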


The following symmetrization result shows that the Rademacher complexity   $\mathfrak{R}(\Psi_j \circ \mathcal{N}^2)$ gives upper bound on
$\sup_{u \in \mathcal{N}^2} |\mathcal{L}_{j}(u) - \widehat{\mathcal{L}_{j}}(u)|,$ $j =1,...,3.$
\begin{Lemma}\label{sym}
$\mathbb{E}_{\mathbf{Z}_n}[\sup_{u \in \mathcal{N}^2} |\mathcal{L}_{j}(u) - \widehat{\mathcal{L}_{j}}(u)|]\leq \mathfrak{R}(\Psi_j \circ \mathcal{N}^2)$, $j =1,...,3.$
\end{Lemma}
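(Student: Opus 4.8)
The plan is to use the classical symmetrization (ghost-sample) argument from empirical process theory (see, e.g., \cite{wellner}). Since $\Omega=(0,1)^{d}$ we have $|\Omega|=1$, and for the boundary term the constant $|\partial\Omega|$ merely factors through; so it suffices to control, for a bounded function class, the uniform deviation between a population mean and its empirical average. Write $g_{u}(z):=\Psi_{j}(z,u(z))$, so that $\Psi_{j}\circ\mathcal{N}^{2}=\{z\mapsto g_{u}(z):u\in\mathcal{N}^{2}\}$; by $|u(z)|\le c_{3}$ together with $\|f\|_{L^{\infty}(\Omega)}\vee\|w\|_{L^{\infty}(\Omega)}\vee\|g\|_{L^{\infty}(\partial\Omega)}\le c_{3}$, every $g_{u}$ is bounded by a constant depending only on $c_{3}$, while $\mathcal{L}_{j}(u)=c\,\mathbb{E}_{Z\sim\mu}[g_{u}(Z)]$ and $\widehat{\mathcal{L}_{j}}(u)=\frac{c}{n}\sum_{i=1}^{n}g_{u}(Z_{i})$ with $c\in\{|\Omega|,|\partial\Omega|\}$. (For $j=3$ one also uses that elements of $\mathcal{N}^{2}$ are continuous, being built from the $C^{1}$ activation $\mathrm{ReLU}^{2}$, so that $Tu(Y)=u(Y)$ pointwise.)

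First I would introduce an independent ghost sample $\mathbf{Z}_{n}'=\{Z_{i}'\}_{i=1}^{n}$, i.i.d.\ from $\mu$ and independent of $\mathbf{Z}_{n}$. Since $\mathbb{E}_{Z\sim\mu}[g_{u}(Z)]=\mathbb{E}_{\mathbf{Z}_{n}'}\big[\tfrac1n\sum_{i}g_{u}(Z_{i}')\big]$, the deviation equals $\mathbb{E}_{\mathbf{Z}_{n}'}\big[\tfrac1n\sum_{i}(g_{u}(Z_{i}')-g_{u}(Z_{i}))\big]$; taking absolute values, then $\sup_{u}$, then $\mathbb{E}_{\mathbf{Z}_{n}}$, and using Jensen's inequality to move the (conditional) ghost expectation outside the supremum, gives
\[
\mathbb{E}_{\mathbf{Z}_{n}}\Big[\sup_{u\in\mathcal{N}^{2}}\Big|\mathbb{E}_{Z\sim\mu}[g_{u}]-\tfrac1n\sum_{i}g_{u}(Z_{i})\Big|\Big]\le\mathbb{E}_{\mathbf{Z}_{n},\mathbf{Z}_{n}'}\Big[\sup_{u\in\mathcal{N}^{2}}\tfrac1n\Big|\sum_{i}(g_{u}(Z_{i}')-g_{u}(Z_{i}))\Big|\Big].
\]
Then I would symmetrize: the pairs $(Z_{i},Z_{i}')$ are exchangeable and the $i$-th summand flips sign under the swap $Z_{i}\leftrightarrow Z_{i}'$, so the right-hand side is unchanged when each summand is multiplied by an arbitrary sign; averaging over i.i.d.\ Rademacher variables $\Sigma_{n}$, it therefore equals $\mathbb{E}_{\mathbf{Z}_{n},\mathbf{Z}_{n}',\Sigma_{n}}\big[\sup_{u}\tfrac1n|\sum_{i}\sigma_{i}(g_{u}(Z_{i}')-g_{u}(Z_{i}))|\big]$. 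Applying the triangle inequality inside the supremum to separate the two sums, each of the resulting two terms has expectation $\mathfrak{R}(\Psi_{j}\circ\mathcal{N}^{2})$ by the definition of Rademacher complexity, and this gives the claim (absorbing the universal constant $2$ from this last step, and the domain factor $c$, into the statement).

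I do not anticipate a genuine obstacle — this is essentially a textbook argument — but two bookkeeping points require care: (i) the uniform boundedness of the $g_{u}$, which legitimizes interchanging $\sup_{u}$ with the ghost expectation and inserting the Rademacher signs, and rules out integrability pathologies; and (ii) tracking constants, since the honest argument produces a factor $2$ and, for $j=3$, a factor $|\partial\Omega|$, which must either be absorbed into $\mathfrak{R}$ or eliminated via a slightly more careful one-sided variant. Note that the Lipschitz property and the normalization $\Psi_{j}(x,0)=0$ from Lemma~\ref{lembound} play no role in this symmetrization step; they enter only in the subsequent contraction bound, Lemma~\ref{Redm1}.
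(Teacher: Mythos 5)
Your proof is correct and follows essentially the same route as the paper: ghost-sample symmetrization, Jensen's inequality, insertion of Rademacher signs by exchangeability, and a final triangle-inequality split into two identically distributed Rademacher-complexity terms. You even flag the same factor-of-$2$ (and $|\partial\Omega|$) bookkeeping slack that the paper's own last line ($=2|\Omega|\mathfrak{R}(\Psi_1\circ\mathcal{N}^2)$) produces but the lemma statement suppresses.
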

\begin{proof}
We give the proof for  $j=1$ and omit the proof for $\Psi_2$ and $\Psi_3$ since they can be shown similarly.
Let $\tilde{\mathbf{Z}}_n = \{\tilde{Z}_i\}_{i=1}^n$ be an i.i.d ghost sample  from $\mu$ and $\tilde{\mathbf{Z}}_n$ is independent of ${\mathbf{Z}}_n$.
 \begin{align*}
 &\mathbb{E}_{\mathbf{Z}_n}[\sup_{u \in \mathcal{N}^2} |\mathcal{L}_{1}(u) - \widehat{\mathcal{L}_{1}}(u)|]\\
  &=\frac{|\Omega|}{n}\mathbb{E}_{\mathbf{Z}_n} [\sup_{u \in \mathcal{N}^2} |\sum_{i=1}^n (\mathbb{E}_{\tilde{\mathbf{Z}}_n}[\frac{w(\tilde{Z}_i)u^2(\tilde{Z}_i)}{2}] -\frac{w(Z_i)u^2(Z_i)}{2})|]\\
  & \leq  \frac{|\Omega|}{n} \mathbb{E}_{\mathbf{Z}_n, \tilde{\mathbf{Z}}_n} [\sup_{u \in \mathcal{N}^2} |\sum_{i =1}^n(\frac{w(\tilde{Z}_i)u^2(\tilde{Z}_i)}{2} -\frac{w(Z_i)u^2(Z_i)}{2})|]\\
  & =   \frac{|\Omega|}{n} \mathbb{E}_{\mathbf{Z}_n, \tilde{\mathbf{Z}}_n,\Sigma_n} [\sup_{u \in \mathcal{N}^2} |\sum_{i=1}^n\sigma_i (\frac{w(\tilde{Z}_i)u^2(\tilde{Z}_i)}{2} -\frac{w(Z_i)u^2(Z_i)}{2})|]\\
  & \leq \frac{|\Omega|}{n}  \mathbb{E}_{\mathbf{Z}_n, \Sigma} [\sup_{u\in \mathcal{N}^2} |\sum_{i=1}^n \frac{w({Z}_i)u^2({Z}_i)}{2}| ] + \frac{|\Omega|}{n}  \mathbb{E}_{\tilde{\mathbf{Z}}_n,\Sigma} [\sup_{u\in \mathcal{N}^2} |\sum_{i=1}^n \frac{w(\tilde{Z}_i)u^2(\tilde{Z}_i)}{2}| ] \\
  &= 2|\Omega|\mathfrak{R}(\Psi_1 \circ \mathcal{N}^2)
  \end{align*}
  where, 
  the first inequality follows from the Jensen's inequality, and the second equality holds since  both $\sigma_i(\frac{w(\tilde{Z}_i)u^2(\tilde{Z}_i)}{2} - \frac{w({Z}_i)u^2({Z}_i)}{2})$ and $(\frac{w(\tilde{Z}_i)u^2(\tilde{Z}_i)}{2} - \frac{w({Z}_i)u^2({Z}_i)}{2})$ are governed by the same law, and the last  equality holds since the distribution of the two terms are the same.
\end{proof}

Next we give a upper bound of $\mathfrak{R}(\mathcal{N}^2)$ in terms of the covering number   of $\mathcal{N}^2$  by using  the   Dudley's entropy formula \cite{dudley}.
\begin{Definition}
Suppose that $W\subset\mathbb{R}^n$. For any $\epsilon>0$, let $V\subset\mathbb{R}^n$ be a $\epsilon$-cover of $W$ with respect to the distance  $d_{\infty}$, that is, for any $w\in W$, there exists a $v\in V$ such that $d_{\infty}(u,v)<\epsilon$, where $d_{\infty}$ is defined by
\begin{equation*}
d_{\infty}(u,v):=\|u-v\|_{{\infty}}.
\end{equation*}
The covering number $\mathcal{C}(\epsilon,W,d_{\infty})$ is defined to be the minimum cardinality among all $\epsilon$-cover of $W$ with respect to the distance  $d_{\infty}$.
\end{Definition}

\begin{Definition}
Suppose that $\mathcal{N}$ is a class of functions from $\Omega$ to $\mathbb{R}$. Given $n$  sample  $\mathbf{Z}_n=(Z_1,Z_2,\cdots,Z_n) \in \Omega^n$, $\mathcal{N}|_{\mathbf{Z}_n}\subset\mathbb{R}^n$ is defined by
\begin{equation*}
\mathcal{N}|_{\mathbf{Z}_n}= \{(u(Z_1),u(Z_2),\cdots, u(Z_n)): u\in\mathcal{N}\}.
\end{equation*}
The uniform covering number $\mathcal{C}_{\infty}(\epsilon,\mathcal{N},n)$ is defined by
\begin{equation*}
\mathcal{C}_{\infty}(\epsilon,\mathcal{N},n)=\max_{\mathbf{Z}_n\in\Omega^n}\mathcal{C}(\epsilon,\mathcal{N}|_{\mathbf{Z}_n},d_{\infty})
\end{equation*}
\end{Definition}

\begin{Lemma}\label{Redm2}
Assume $0\in \mathcal{N}$ and the diameter of $\mathcal{N}$ is less than $\mathcal{B}$, i.e., $\|u\|_{L^\infty(\Omega)} \leq \mathcal{B}, \forall u \in \mathcal{N}$.  Then
$$\mathfrak{R}(\mathcal{N}) \leq  \inf_{0<\delta<\mathcal{B}}\left(4 \delta+\frac{12}{\sqrt{n}} \int_{\delta}^{\mathcal{B}} \sqrt{\log (2\mathcal{C}\left( \epsilon, \mathcal{N}, n\right))}\mathrm{d}\epsilon\right).$$
\end{Lemma}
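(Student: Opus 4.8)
The plan is to prove this by the classical chaining argument (Dudley's entropy integral), carried out conditionally on the sample $\mathbf{Z}_n$ and then completed by taking expectations. First I would reduce to the empirical Rademacher complexity: since $\mathfrak{R}(\mathcal{N}) = \mathbb{E}_{\mathbf{Z}_n}\big[\mathbb{E}_{\Sigma_n}[\sup_{u\in\mathcal{N}}\frac1n|\sum_i\sigma_i u(Z_i)|]\big]$ and the claimed bound involves only the uniform covering number $\mathcal{C}(\epsilon,\mathcal{N},n)=\max_{\mathbf{Z}_n}\mathcal{C}(\epsilon,\mathcal{N}|_{\mathbf{Z}_n},d_\infty)$, it suffices to bound the inner expectation for each fixed $\mathbf{Z}_n$. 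I would also record the elementary fact that the empirical $\ell_2$ metric $d_2(u,v)=(\frac1n\sum_i(u(Z_i)-v(Z_i))^2)^{1/2}$ satisfies $d_2\le d_\infty$ on $\mathbb{R}^n$, so every $d_\infty$ $\epsilon$-cover of $\mathcal{N}|_{\mathbf{Z}_n}$ is also a $d_2$ $\epsilon$-cover; hence the $d_2$-covering numbers are dominated by $\mathcal{C}(\epsilon,\mathcal{N},n)$, and the sub-Gaussian (Massart) maximal inequality below can be applied through $d_2$. This $d_\infty$-to-$d_2$ comparison is exactly what produces the $1/\sqrt n$ scaling.

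For the chaining itself, set $\epsilon_k=2^{-k}\mathcal{B}$, let $\mathcal{N}_k\subset\mathcal{N}|_{\mathbf{Z}_n}$ be a minimal $d_2$ $\epsilon_k$-cover (so $|\mathcal{N}_k|\le\mathcal{C}(\epsilon_k,\mathcal{N},n)$), and take $\mathcal{N}_0=\{0\}$, which is admissible because $0\in\mathcal{N}$ and the diameter is at most $\mathcal{B}=\epsilon_0$. For each $u$ let $u_k\in\mathcal{N}_k$ be a nearest element; then for any integer $K\ge1$, writing $u=u_0+\sum_{k=1}^K(u_k-u_{k-1})+(u-u_K)$ and using the triangle inequality,
\[
\frac1n\Big|\sum_i\sigma_i u(Z_i)\Big|\le\sum_{k=1}^K\frac1n\Big|\sum_i\sigma_i(u_k-u_{k-1})(Z_i)\Big|+\epsilon_K,
\]
where the residual term is controlled by Cauchy--Schwarz, $\frac1n|\sum_i\sigma_i(u-u_K)(Z_i)|\le d_2(u,u_K)\le\epsilon_K$. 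Taking $\sup_u$ and then $\mathbb{E}_{\Sigma_n}$, I would bound the $k$-th term with the finite-class maximal inequality: the increment vector ranges over a set of at most $|\mathcal{N}_k||\mathcal{N}_{k-1}|\le\mathcal{C}(\epsilon_k,\mathcal{N},n)^2$ vectors (at most $2\mathcal{C}(\epsilon_k,\mathcal{N},n)^2$ after symmetrising to remove the absolute value), each of empirical $\ell_2$ norm at most $d_2(u_k,u)+d_2(u,u_{k-1})\le\epsilon_k+\epsilon_{k-1}=3\epsilon_k$. Massart's lemma then gives a bound of at most $6\epsilon_k\sqrt{\log(2\mathcal{C}(\epsilon_k,\mathcal{N},n))}/\sqrt n$ for that term, after the elementary estimate $\log(2\mathcal{C}^2)\le 2\log(2\mathcal{C})$.

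Finally I would sum the series and convert it to the integral. Since $\epsilon\mapsto\sqrt{\log(2\mathcal{C}(\epsilon,\mathcal{N},n))}$ is nonincreasing and $\epsilon_k-\epsilon_{k+1}=\epsilon_k/2$, one has $\epsilon_k\sqrt{\log(2\mathcal{C}(\epsilon_k,\mathcal{N},n))}\le2\int_{\epsilon_{k+1}}^{\epsilon_k}\sqrt{\log(2\mathcal{C}(\epsilon,\mathcal{N},n))}\,\mathrm{d}\epsilon$; summing over $k=1,\dots,K$ telescopes the integral to $2\int_{\epsilon_{K+1}}^{\mathcal{B}}$. Given $\delta\in(0,\mathcal{B})$, I would choose $K$ so that $\epsilon_{K+1}\le\delta<\epsilon_K$, whence the residual $\epsilon_K\le2\delta$ and the integral extends down to $\delta$; collecting constants yields $\mathbb{E}_{\Sigma_n}[\sup_u\frac1n|\sum_i\sigma_i u(Z_i)|]\le4\delta+\frac{12}{\sqrt n}\int_\delta^{\mathcal{B}}\sqrt{\log(2\mathcal{C}(\epsilon,\mathcal{N},n))}\,\mathrm{d}\epsilon$. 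Taking the infimum over $\delta$ and then the expectation over $\mathbf{Z}_n$ finishes the proof. The main obstacle is not conceptual but the bookkeeping in the chaining step: correctly bounding the cardinality and $\ell_2$-radius of each increment class, handling the absolute values without destroying the sub-Gaussian structure, and tracking the numerical constants so that the stated $4$ and $12$ come out.
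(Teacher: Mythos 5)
Your overall plan is exactly the route the paper takes: chaining over the dyadic scales $\epsilon_k$, anchoring at $\mathcal{N}_0=\{0\}$, Massart's finite-class lemma on each increment class (of cardinality at most $\mathcal{C}(\epsilon_k,\mathcal{N},n)^2$ and radius $\le\epsilon_k+\epsilon_{k-1}=3\epsilon_k$), Cauchy--Schwarz for the residual, and the standard conversion of the geometric sum into Dudley's integral. Your $d_2$-versus-$d_\infty$ bookkeeping is equivalent to the paper's, which carries the $\sqrt n$ factor inside its Euclidean-diameter bound $D_j\le\sqrt n\,(\epsilon_j+\epsilon_{j+1})$ rather than working with the normalized empirical metric.

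There is, however, a concrete sign error in your final step that breaks the argument as written. After telescoping you arrive at $\mathfrak{R}(\mathcal{N})\le\epsilon_K+\frac{12}{\sqrt n}\int_{\epsilon_{K+1}}^{\mathcal{B}}\sqrt{\log(2\mathcal{C}(\epsilon,\mathcal{N},n))}\,\mathrm{d}\epsilon$, and you then choose $K$ so that $\epsilon_{K+1}\le\delta<\epsilon_K$, asserting ``the integral extends down to $\delta$.'' But with $\epsilon_{K+1}\le\delta$ you have $\int_{\epsilon_{K+1}}^{\mathcal{B}}\ge\int_{\delta}^{\mathcal{B}}$: the inequality points the wrong way, so you have not obtained the claimed bound. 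To make the integral term $\le\int_{\delta}^{\mathcal{B}}$ you need $\epsilon_{K+1}\ge\delta$, which necessarily inflates the residual: the paper takes $K$ to be the \emph{largest} integer with $\epsilon_{K+1}>\delta$, giving $\epsilon_K=4\epsilon_{K+2}\le4\delta$ while keeping the lower limit of integration above $\delta$ --- this is where the constant $4$ (not $2$) in front of $\delta$ really comes from. Alternatively you can keep your choice of $K$, note $\epsilon_{K+1}=\epsilon_K/2>\delta/2$ so the bound reads $2\delta+\frac{12}{\sqrt n}\int_{\delta/2}^{\mathcal{B}}$, relabel $\delta\mapsto2\delta$, and dispose of $\delta\ge\mathcal{B}/2$ trivially (there $\mathfrak{R}(\mathcal{N})\le\mathcal{B}\le4\delta$ since $\|u\|_{L^\infty}\le\mathcal{B}$). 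Either patch works, but as written the step fails.
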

\begin{proof}
The proof follows from the chaining method \cite{wellner}.
We need the  Massart's finite class Lemma \cite{boucheron2013concentration}.
\begin{Lemma}\label{Mfinite}
For any finite set $V \in \mathbb{R}^n$ with diameter $D = \sup_{v\in V}\|v\|_2$, then
$$\mathbb{E}_{\Sigma_n} [\sup_{v\in V} \frac{1}{n}|\sum_i \sigma_i v_i| ]\leq \frac{D}{n}\sqrt{2\log (2|V|)}.$$
\end{Lemma}
By definition $$\mathfrak{R}(\mathcal{N}) = \mathfrak{R}(\mathcal{N}|_{\mathbf{Z}_n}) = \mathbb{E}_{\mathbb{Z}_n} [\mathbb{E}_{\Sigma}[\sup_{u\in \mathcal{N}}\frac{1}{n}|\sum_{i} \sigma_i u(Z_i)| |\mathbf{Z}_n]].$$
Thus, it suffice to show $$\mathbb{E}_{\Sigma}[\sup_{u\in \mathcal{N}}\frac{1}{n}|\sum_{i} \sigma_i u(Z_i)|]
\leq  \inf_{0<\delta< \mathcal{B}}\left(4 \delta+\frac{12}{\sqrt{n}} \int_{\delta}^{\mathcal{B}} \sqrt{\log \mathcal{C}\left( \epsilon, \mathcal{N}^2, n\right)}\mathrm{d}\epsilon\right)$$ by conditioning  on $\mathbf{Z}_n$.
Given an positive integer $K$, let $\epsilon_k = 2^{-k+1}\mathcal{B}$, $k=1,...K$. Let $C_k$ be a cover of $\mathcal{N}|_{\mathbf{Z}_n}\subseteq\mathbb{R}^n$ whose   covering number is denoted as  $\mathcal{C}(\epsilon_k,\mathcal{N}|_{\mathbf{Z}_n},d_{\infty})$. Then, by definition,
$\forall u \in \mathcal{N},$ there $\exists$ $c^k \in C_k$ such that $$d_{\infty}(u|_{\mathbf{Z}_n},c^k) =\max\{|u(Z_i)-c^k_i|, i =1,...,n\}\leq \epsilon_k, k =1,...,K.$$ Moreover, we denote
the best approximate element of $u$ in $C_k$ with respect to $d_{\infty}$ as $c^k(u)$.
Then, \begin{align*}
&\mathbb{E}_{\Sigma}[\sup_{u\in \mathcal{N}}\frac{1}{n}|\sum_{i=1}^n \sigma_i u(Z_i)|]\\
&= \mathbb{E}_{\Sigma}[\sup_{u\in \mathcal{N}}\frac{1}{n}|\sum_{i=1}^n \sigma_i (u(Z_i)-c^K_i(u)) +\sum_{j=1}^{K-1}\sum_{i=1}^n \sigma_i(c^j_i(u)-c^{j+1}_i(u))+\sum_{i=1}^n \sigma_i c^{1}_{i}(u)|]\\
& \leq  \mathbb{E}_{\Sigma}[\sup_{u\in \mathcal{N}}\frac{1}{n}|\sum_{i=1}^n \sigma_i (u(Z_i)-c^K_i(u))|]
 +\sum_{j=1}^{K-1} \mathbb{E}_{\Sigma}[\sup_{u\in \mathcal{N}}\frac{1}{n}|\sum_{i=1}^n \sigma_i(c^j_i(u)-c^{j+1}_i(u))|]\\
 &+\mathbb{E}_{\Sigma}[\sup_{u\in \mathcal{N}}\frac{1}{n}|\sum_{i=1}^n \sigma_i c^{1}_{i}(u)|].
\end{align*}
Since $0\in \mathcal{N}$, and the diameter of $\mathcal{N}$ is smaller than $\mathcal{B}$, we can choose $C_1 = \{0\}$ such that
the third term in the above display vanishes.
By  H\"{o}lder's inequality, we deduce that the first term can be bounded by $\epsilon_K$ as follows.
\begin{align*}
&\mathbb{E}_{\Sigma}[\sup_{u\in \mathcal{N}}\frac{1}{n}|\sum_{i=1}^n \sigma_i (u(Z_i)-c^K_i(u))|]\\
&\leq \mathbb{E}_{\Sigma}[\sup_{u\in \mathcal{N}}\frac{1}{n}(\sum_{i=1}^n|\sigma_i|)(\sum_{i=1}^n\max_{i=1,...,n}\{|u(Z_i)-c^K_i(u)|\})]\\
&\leq \epsilon_K.
\end{align*}
Let $V_j = \{c^j(u)-c^{j+1}(u): u\in \mathcal{N}\}$. Then by definition, the number of elements in $V_j$ and $C_j$ satisfying
$$|V_j|\leq |C_j||C_{j+1}|\leq |C_{j+1}|^2.$$ And the  diameter of $V_j$ denoted as $D_j$ can be bounded as
\begin{align*}
&D_j = \sup_{v\in V_j}\|v\|_2  \leq \sqrt{n}\sup_{u\in \mathcal{N}} \|c^j(u)-c^{j+1}(u)\|_{\infty} \\
& \leq \sqrt{n}\sup_{u\in \mathcal{N}} \|c^j(u)-u\|_{\infty} + \|u-c^{j+1}(u)\|_{\infty}\\
&\leq \sqrt{n}(\epsilon_j + \epsilon_{j+1})\\
 &\leq 3\sqrt{n} \epsilon_{j+1}.
\end{align*}
Then,
\begin{align*}
&\mathbb{E}_{\Sigma}[\sup_{u\in \mathcal{N}}\frac{1}{n}|\sum_{j=1}^{K-1}\sum_{i=1}^n \sigma_i(c^j_i(u)-c^{j+1}_i(u))|]\leq \sum_{j=1}^{K-1} \mathbb{E}_{\Sigma}[\sup_{v\in V_j}\frac{1}{n}|\sum_{i=1}^n \sigma_iv_j|]\\
&\leq \sum_{j=1}^{K-1} \frac{D_j}{n}\sqrt{2\log (2|V_j|)}\\
&\leq \sum_{j=1}^{K-1}  \frac{6\epsilon_{j+1}}{\sqrt{n}}\sqrt{\log (2|C_{j+1}|)},
\end{align*}
where we use triangle inequality in the  first inequality, and use Lemma \ref{Mfinite} in the second
inequality.
Putting all the above estimates together, we get
\begin{align*}
&\mathbb{E}_{\Sigma}[\sup_{u\in \mathcal{N}}\frac{1}{n}|\sum_{i} \sigma_i u(Z_i)|]\leq  \epsilon_K + \sum_{j=1}^{K-1}  \frac{6\epsilon_{j+1}}{\sqrt{n}}\sqrt{\log (2|C_{j+1}|)}\\
& \leq \epsilon_K + \sum_{j=1}^{K}  \frac{12(\epsilon_j-\epsilon_{j+1})}{\sqrt{n}}\sqrt{\log (2 \mathcal{C}\left( \epsilon_j, \mathcal{N}, n\right))}\\
&\leq  \epsilon_K +   \frac{12}{\sqrt{n}} \int_{\epsilon_{K+1}}^{\mathcal{B}}\sqrt{\log (2 \mathcal{C}\left( \epsilon, \mathcal{N}, n\right))}\mathrm{d}\epsilon\\
&\leq \inf_{0<\delta<\mathcal{B}}(4 \delta+\frac{12}{\sqrt{n}} \int_{\delta}^{\mathcal{B}} \sqrt{\log (2\mathcal{C}\left( \epsilon, \mathcal{N}, n\right))}\mathrm{d}\epsilon).
\end{align*}
where, last inequality holds since for $0<\delta< \mathcal{B}$, we can choose $K$ to be the largest integer such that  $\epsilon_{K+1} >\delta$, at this time $\epsilon_K\leq  4 \epsilon_{K+2} \leq 4 \delta.$
\end{proof}

Now we turn to handle the most difficult term  $\sup_{u \in \mathcal{N}^2} |\mathcal{L}_{4}(u) - \widehat{\mathcal{L}_{4}}(u)|$, where
we need to bound the Rademacher complexity  of the non-Lipschitz composition of  gradient norm  and   $\mathrm{ReLU}^2$ network.
 We believe that the technique we used here
is helpful for bounding the statistical errors for other deep PDEs solvers where the main difficulties is bounding the
Rademacher complexity of non-Lipschitz composition  induced by the gradient operator.
\begin{Lemma}\label{sym2}
\begin{align}
&\mathbb{E}_{\mathbf{Z}_n}[\sup_{u \in \mathcal{N}^2} |\mathcal{L}_{4}(u) - \widehat{\mathcal{L}_{4}}(u)|] \nonumber\\
&\leq \mathbb{E}_{\mathbf{Z}_n,\Sigma_n}[\sup_{u\in \mathcal{N}^2}\frac{1}{n}|\sum_{i} \sigma_i \|\nabla u(Z_i)\|^2|] \label{symg}\\
&\leq \mathfrak{R}(\mathcal{N}^{1,2}) = \mathbb{E}_{\mathbf{Z}_n,\Sigma_n}[\sup_{u\in \mathcal{N}^{1,2}}\frac{1}{n}|\sum_{i} \sigma_i u(Z_i)|]\label{upr12}\\
&  \leq  \inf_{0<\delta<\mathcal{B}}\left(4 \delta+\frac{12}{\sqrt{n}} \int_{\delta}^{\mathcal{B}} \sqrt{\log (2\mathcal{C}\left( \epsilon, \mathcal{N}^{1,2}, n\right))}\mathrm{d}\epsilon\right)\label{rg2c}.
\end{align}
\end{Lemma}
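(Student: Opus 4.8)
The plan is to prove the three inequalities of Lemma \ref{sym2} one at a time. For (\ref{symg}) I would repeat the symmetrization argument used for Lemma \ref{sym}: introduce an independent ghost sample $\tilde{\mathbf{Z}}_n$ drawn from $\mu$, write $\mathcal{L}_4(u)$ as the expectation over $\tilde{\mathbf{Z}}_n$ of the corresponding ghost empirical average, move the supremum inside via Jensen's inequality, insert i.i.d. Rademacher signs $\sigma_i$ (legitimate because $\|\nabla u(\tilde Z_i)\|_2^2-\|\nabla u(Z_i)\|_2^2$ is symmetric), and then split the resulting sum into two identically distributed pieces. Since $\Omega=(0,1)^d$ gives $|\Omega|=1$, the factor $\tfrac12$ carried by $\mathcal{L}_4$ cancels the factor $2$ produced by the splitting, leaving precisely the right-hand side of (\ref{symg}).

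The heart of the lemma is (\ref{upr12}), which amounts to showing that $x\mapsto\|\nabla u(x)\|_2^2$ belongs to $\mathcal{N}^{1,2}$ for every $u\in\mathcal{N}^2$. The key point is that $\mathrm{ReLU}^2$ is continuously differentiable with $(\mathrm{ReLU}^2)'(t)=2\,\mathrm{ReLU}(t)$. Hence, applying the chain rule (backpropagation) to the layers $\mathbf{f}_\ell=\varrho_\ell(A_\ell\mathbf{f}_{\ell-1}+\mathbf{b}_\ell)$, the Jacobian $\nabla u_\phi(x)$ is a finite product of the weight matrices $A_\ell$ interleaved with diagonal matrices whose entries are $2\,\mathrm{ReLU}$ evaluated at the pre-activations $A_\ell\mathbf{f}_{\ell-1}+\mathbf{b}_\ell$, and those pre-activations are themselves outputs of the forward $\mathrm{ReLU}^2$ sub-network. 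So each coordinate $\partial_{x_k}u_\phi$ is computed by a network that uses $\mathrm{ReLU}$ for the derivative factors and $\mathrm{ReLU}^2$ for the forward pass as well as for all the products, recalling from Lemma \ref{spasrelu2} that $xy=\tfrac14[\mathrm{ReLU}^2(x+y)+\mathrm{ReLU}^2(-x-y)-\mathrm{ReLU}^2(x-y)-\mathrm{ReLU}^2(y-x)]$. One final $\mathrm{ReLU}^2$ layer realizes $\|\nabla u_\phi(x)\|_2^2=\sum_{k=1}^d(\partial_{x_k}u_\phi(x))^2$. Tracking this construction, the depth grows only by a constant multiple of $\mathcal{D}$ and the width only by a polynomial factor of $\mathcal{W}$, while the output is bounded by $\mathcal{B}$ since $\|\nabla u_\phi\|_2^2\leq\mathcal{B}$ is built into the definition of $\mathcal{N}^2$. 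Consequently $\{\|\nabla u(\cdot)\|_2^2:u\in\mathcal{N}^2\}\subseteq\mathcal{N}^{1,2}$, and enlarging the index set of the supremum gives (\ref{upr12}).

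Finally, (\ref{rg2c}) follows by applying Lemma \ref{Redm2} directly with $\mathcal{N}=\mathcal{N}^{1,2}$: the zero function lies in $\mathcal{N}^{1,2}$ and every element has $L^\infty$ norm at most $\mathcal{B}$, so Dudley's bound applies verbatim.

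I expect the main obstacle to be the bookkeeping in the middle step: turning the backpropagation formula for $\nabla u_\phi$ into an honest feedforward network using only $\mathrm{ReLU}$ and $\mathrm{ReLU}^2$, and counting its depth and width. The subtlety is that the network must carry both the forward-pass values and the pre-activations forward together (so that the $\mathrm{ReLU}$ derivative factors can be formed), and that the repeated use of the multiplication gadget to assemble the matrix products must be organized so the width stays within the claimed polynomial bound. Boundedness of the intermediate quantities is not a concern, since the only output bound required is on $\|\nabla u_\phi\|_2^2$, which the definition of $\mathcal{N}^2$ already supplies.
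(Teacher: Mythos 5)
Your proposal is correct and follows essentially the same route as the paper: symmetrization for the first inequality, the observation that $(\mathrm{ReLU}^2)'=2\,\mathrm{ReLU}$ together with the multiplication gadget so that $\|\nabla u_\phi\|_2^2$ is itself a mixed $\mathrm{ReLU}$/$\mathrm{ReLU}^2$ network (hence the supremum over $\mathcal{N}^2$ is absorbed into the larger index set $\mathcal{N}^{1,2}$), and then a direct invocation of Dudley's bound from Lemma~\ref{Redm2}. The paper carries out the "bookkeeping" you flag concretely by induction over the layers, arriving at the explicit counts depth $\mathcal{D}+3$ and width $d(\mathcal{D}+2)\mathcal{W}$ for the derivative network, which are later fed into the pseudo-dimension bound; your high-level "constant additive depth, polynomial width" description matches those figures.
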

\begin{proof}
The proof of (\ref{symg}) is based on the symmetrization method used in the proof of Lemma \ref{sym}, we omit the detail here.
The proof of (\ref{upr12}) is a direct consequence of the following claim.
{Claim: Let $u$ be a function implemented by a $\relu2$ network with depth $\mathcal{D}$ and width $\mathcal{W}$. Then $\|\nabla u\|_2^2$ can be implemented by a $\mathrm{ReLU}$-$\relu2$ network with depth $\mathcal{D}+3$ and width $d\left(\mathcal{D}+2\right)\mathcal{W}$.}

Denote $\mathrm{ReLU}$ and $\relu2$ as $\sigma_1$ and $\sigma_2$, respectively.
As long as we show that each partial derivative  $D_iu(i=1,2,\cdots,d)$ can be implemented by a $\mathrm{ReLU}$-$\relu2$ network respectively, we can easily obtain the network we desire, since, $\|\nabla u\|_2^2=\sum_{i=1}^{d}\left|D_i u\right|^2$  and the square function can be implemented by  $x^2=\sigma_2(x)+\sigma_2(-x)$.

Now we show that for any $i=1,2,\cdots,d$, $D_iu$ can be implemented by a $\mathrm{ReLU}$-$\relu2$ network. We deal with the first two layers in details since there  are  a  little bit difference for the first two layer and apply induction for layers $k\geq3$. For the first layer, since $\sigma_2^{'}(x)=2\sigma_1(x)$, we have for any $q=1,2\cdots,n_1$
\begin{equation*}
D_iu_q^{(1)}=D_i\sigma_2\left(\sum_{j=1}^{d}a_{qj}^{(1)}x_j+b_q^{(1)}\right)
=2\sigma_1\left(\sum_{j=1}^{d}a_{qj}^{(1)}x_j+b_q^{(1)}\right)\cdot a_{qi}^{(1)}
\end{equation*}
Hence $D_iu_q^{(1)}$ can be implemented by a $\mathrm{ReLU}$-$\relu2$ network with depth $2$ and width $1$. For the second layer,
\begin{equation*}
D_iu_q^{(2)}=D_i\sigma_2\left(\sum_{j=1}^{n_1}a_{qj}^{(2)}u_j^{(1)}+b_q^{(2)}\right)
=2\sigma_1\left(\sum_{j=1}^{n_1}a_{qj}^{(2)}u_j^{(1)}+b_q^{(2)}\right)\cdot\sum_{j=1}^{n_1}a_{qj}^{(2)}D_iu_j^{(1)}
\end{equation*}
Since $\sigma_1\left(\sum_{j=1}^{n_1}a_{qj}^{(2)}u_j^{(1)}+b_q^{(2)}\right)$ and $\sum_{j=1}^{n_1}a_{qj}^{(2)}D_iu_j^{(1)}$ can be implemented by two $\mathrm{ReLU}$-$\relu2$ \verb"subnetworks", respectively, and the multiplication can also be implemented by
\begin{equation*}
\begin{split}
x\cdot y&=\frac{1}{4}\left[(x+y)^2-(x-y)^2\right]\\
&=\frac{1}{4}\left[\sigma_2(x+y)+\sigma_2(-x-y)-\sigma_2(x-y)-\sigma_2(-x+y)\right],
\end{split}
\end{equation*}
we conclude that $D_iu_q^{(2)}$ can be implemented by a $\mathrm{ReLU}$-$\relu2$ network. We have $$\mathcal{D}\left(\sigma_1\left(\sum_{j=1}^{n_1}a_{qj}^{(2)}u_j^{(1)}+b_q^{(2)}\right)\right)=3, \mathcal{W}\left(\sigma_1\left(\sum_{j=1}^{n_1}a_{qj}^{(2)}u_j^{(1)}+b_q^{(2)}\right)\right)\leq\mathcal{W}$$ and $$\mathcal{D}\left(\sum_{j=1}^{n_1}a_{qj}^{(2)}D_iu_j^{(1)}\right)=2, \mathcal{W}\left(\sum_{j=1}^{n_1}a_{qj}^{(2)}D_iu_j^{(1)}\right)\leq\mathcal{W}.$$ Thus $\mathcal{D}\left(D_iu_q^{(2)}\right)=4,$ $\mathcal{W}\left(D_iu_q^{(2)}\right)\leq\max\{2\mathcal{W},4\}$.

Now we apply induction for layers $k\geq3$. For the third layer,
\begin{equation*}
D_iu_q^{(3)}=D_i\sigma_2\left(\sum_{j=1}^{n_2}a_{qj}^{(3)}u_j^{(2)}+b_q^{(3)}\right)
=2\sigma_1\left(\sum_{j=1}^{n_2}a_{qj}^{(3)}u_j^{(2)}+b_q^{(3)}\right)\cdot\sum_{j=1}^{n_2}a_{qj}^{(3)}D_iu_j^{(2)}.
\end{equation*}
Since
$\mathcal{D}\left(\sigma_1\left(\sum_{j=1}^{n_2}a_{qj}^{(3)}u_j^{(2)}+b_q^{(3)}\right)\right)=4$, $\mathcal{W}\left(\sigma_1\left(\sum_{j=1}^{n_2}a_{qj}^{(3)}u_j^{(2)}+b_q^{(3)}\right)\right)\leq\mathcal{W}$ and $$\mathcal{D}\left(\sum_{j=1}^{n_2}a_{qj}^{(3)}D_iu_j^{(2)}\right)=4, \mathcal{W}\left(\sum_{j=1}^{n_1}a_{qj}^{(3)}D_iu_j^{(2)}\right)\leq\max\{2\mathcal{W},4\mathcal{W}\}=4\mathcal{W},$$ we conclude that $D_iu_q^{(3)}$ can be implemented by a $\mathrm{ReLU}$-$\relu2$ network and $\mathcal{D}\left(D_iu_q^{(3)}\right)=5$, $\mathcal{W}\left(D_iu_q^{(3)}\right)\leq\max\{5\mathcal{W},4\}=5\mathcal{W}$.

We assume that $D_iu_q^{(k)}(q=1,2,\cdots,n_k)$ can be implemented by a $\mathrm{ReLU}$-$\relu2$ network and $\mathcal{D}\left(D_iu_q^{(k)}\right)=k+2$, $\mathcal{W}\left(D_iu_q^{(3)}\right)\leq(k+2)\mathcal{W}$. For the $(k+1)-$th layer,
\begin{align*}
&D_iu_q^{(k+1)}=D_i\sigma_2\left(\sum_{j=1}^{n_k}a_{qj}^{(k+1)}u_j^{(k)}+b_q^{(k+1)}\right)\\
&=2\sigma_1\left(\sum_{j=1}^{n_k}a_{qj}^{(k+1)}u_j^{(k)}+b_q^{(k+1)}\right)\cdot\sum_{j=1}^{n_k}a_{qj}^{(k+1)}D_iu_j^{(k)}.
\end{align*}
Since
$\mathcal{D}\left(\sigma_1\left(\sum_{j=1}^{n_k}a_{qj}^{(k+1)}u_j^{(k)}+b_q^{(k+1)}\right)\right)=k+2$, $\mathcal{W}\left(\sigma_1\left(\sum_{j=1}^{n_k}a_{qj}^{(k+1)}u_j^{(k)}+b_q^{(k+1)}\right)\right)\leq\mathcal{W}$ and $\mathcal{D}\left(\sum_{j=1}^{n_k}a_{qj}^{(k+1)}D_iu_j^{(k)}\right)=k+2$, $\mathcal{W}\left(\sum_{j=1}^{n_k}a_{qj}^{(k+1)}D_iu_j^{(k)}\right)\leq\max\{(k+2)\mathcal{W},4\mathcal{W}\}=(k+2)\mathcal{W}$, we conclude that $D_iu_q^{(k+1)}$ can be implemented by a $\mathrm{ReLU}$-$\relu2$ network and $\mathcal{D}\left(D_iu_q^{(k+1)}\right)=k+3$, $\mathcal{W}\left(D_iu_q^{(k+1)}\right)\leq\max\{(k+3)\mathcal{W},4\}=(k+3)\mathcal{W}$.

Hence we derive that $D_iu=D_iu_1^{\mathcal{D}}$ can be implemented by a $\mathrm{ReLU}$-$\relu2$ network and $\mathcal{D}\left(D_iu\right)=\mathcal{D}+2$, $\mathcal{W}\left(D_iu\right)\leq\left(\mathcal{D}+2\right)\mathcal{W}$. Finally we obtain that $\mathcal{D}\left(\|\nabla u\|^2\right)=\mathcal{D}+3$, $\mathcal{W}\left(\|\nabla u\|^2\right)\leq d\left(\mathcal{D}+2\right)\mathcal{W}$.

The proof of (\ref{rg2c}) follows from Lemma \ref{Redm2}.
\end{proof}

{ By Lemma \ref{sym}, Lemma \ref{Redm1}, Lemma \ref{Redm2} and Lemma \ref{sym2}, we have to find upper bounds for  the
converging numbers $\mathcal{C}\left( \epsilon, \mathcal{N}^{2}, n\right)$  and $\mathcal{C}\left( \epsilon, \mathcal{N}^{1,2}, n\right)$ used in the Dudley's entropy formula. To this end, we need the VC-dimension   \cite{vapnik2015uniform} and Pseudo-dimension  \cite{anthony2009neural}.}



\begin{Definition}
Let $\mathcal{N}$ be a set of functions from $X = \Omega (\partial \Omega)$ to $\{0,1\}$. Suppose that $S=\{x_1,x_2,\cdots,x_n\}\subset X$. We say that $S$ is shattered by $\mathcal{N}$ if for any $b\in\{0,1\}^n$, there exists a $u\in\mathcal{N}$ satisfying
\begin{equation*}
u(x_i)=b_i,\quad i=1,2,\dots,n
\end{equation*}
\end{Definition}

\begin{Definition}
The VC-dimension of $\mathcal{N}$, denoted as $\mathrm{VCdim}(\mathcal{N})$, is defined to be the maximum cardinality among all sets shattered by $\mathcal{N}$.
\end{Definition}

VC-dimension reflects the capability of a class of functions to perform binary classification of
points. The larger VC-dimension is, the stronger the capability to perform binary classification is.  For more discussion of VC-dimension, readers are referred to \cite{anthony2009neural}.

For real-valued functions, we can generalize the concept of VC-dimension into pseudo-dimension \cite{anthony2009neural}.
\begin{Definition}
Let $\mathcal{N}$ be a set of functions from $X$ to $\mathbb{R}$. Suppose that $S=\{x_1,x_2,\cdots,x_n\}\subset X$. We say that $S$ is pseudo-shattered by $\mathcal{N}$ if there exists $y_1,y_2,\cdots,y_n$ such that for any $b\in\{0,1\}^n$, there exists a $u\in\mathcal{N}$ satisfying
\begin{equation*}
\mathrm{sign}(u(x_i)-y_i)=b_i,\quad i=1,2,\dots,n
\end{equation*}
and we say that $\{y_i\}_{i=1}^{n}$ witnesses the shattering.
\end{Definition}

\begin{Definition}
The pseudo-dimension of $\mathcal{N}$, denoted as $\mathrm{Pdim}(\mathcal{N})$, is defined to be the maximum cardinality among all sets pseudo-shattered by $\mathcal{N}$.
\end{Definition}

The following proposition showing a relation between uniform covering number and pseudo-dimension. 
\begin{Proposition}[Theorem 12.2, \cite{anthony2009neural}]\label{covering number pdim}
Let $\mathcal{N}$ be a set of real functions from a domain $X$ to the bounded interval $[0, \mathcal{B}]$. Let $\epsilon>0$. Then
\begin{equation*}
\mathcal{C}_{\infty}(\epsilon, \mathcal{N}, n) \leq \sum_{i=1}^{\mathrm{Pdim}(\mathcal{N})}\left(\begin{array}{c}
n \\
i
\end{array}\right)\left(\frac{\mathcal{B}}{\epsilon}\right)^{i},
\end{equation*}
which is less than $\left(\frac{en\mathcal{B}}{\epsilon\cdot\mathrm{Pdim}(\mathcal{N})}\right)^{\mathrm{Pdim}(\mathcal{N})}$ for $n\geq\mathrm{Pdim}(\mathcal{N})$.
\end{Proposition}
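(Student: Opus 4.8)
The plan is to fix an arbitrary sample $\mathbf{Z}_n=(x_1,\dots,x_n)\in X^n$, bound the $d_{\infty}$-covering number of the finite set $\mathcal{N}|_{\mathbf{Z}_n}\subset[0,\mathcal{B}]^n$ at scale $\epsilon$, and then maximize over $\mathbf{Z}_n$. The route has three parts: (i) discretize the range so that covering becomes counting for a finite-valued class; (ii) invoke a Sauer--Shelah-type combinatorial bound for finite-valued classes of controlled pseudo-dimension; (iii) tidy up the binomial sum.

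\emph{Step 1 (quantization).} Set $K=\lfloor\mathcal{B}/\epsilon\rfloor$ and define the non-decreasing map $Q:[0,\mathcal{B}]\to\{0,1,\dots,K\}$ by $Q(t)=\min\{\lfloor t/\epsilon\rfloor,K\}$; since $\mathcal{B}-K\epsilon<\epsilon$ one has $Q(s)=Q(t)\Rightarrow|s-t|<\epsilon$. Extending $Q$ coordinatewise, choosing one preimage in $\mathcal{N}|_{\mathbf{Z}_n}$ for each value in $Q(\mathcal{N}|_{\mathbf{Z}_n})$ yields an $\epsilon$-cover, hence $\mathcal{C}(\epsilon,\mathcal{N}|_{\mathbf{Z}_n},d_{\infty})\le\left|\mathcal{G}|_{\mathbf{Z}_n}\right|$, where $\mathcal{G}:=\{Q\circ u:u\in\mathcal{N}\}$ maps $X$ into $\{0,\dots,K\}$. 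Because composing with a fixed monotone map cannot increase the pseudo-dimension, $\mathrm{Pdim}(\mathcal{G})\le\mathrm{Pdim}(\mathcal{N})=:d$.

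\emph{Step 2 (finite-valued Sauer--Shelah, the crux).} It remains to show that any class $\mathcal{G}$ of functions from an $n$-point set into $\{0,\dots,K\}$ with $\mathrm{Pdim}(\mathcal{G})\le d$ obeys $\left|\mathcal{G}|_{\mathbf{Z}_n}\right|\le\sum_{i=0}^{d}\binom{n}{i}K^{i}$. I would prove this by the shifting/compression argument that generalizes the classical proof of Sauer's lemma: iterating over coordinates, replace each vector of $\mathcal{G}|_{\mathbf{Z}_n}$ by the one obtained by lowering its entry in the chosen coordinate to the smallest value not already used by a distinct vector agreeing with it elsewhere; this operation preserves the cardinality and does not increase the pseudo-dimension, and on a set stable under all such moves the claimed count is a direct combinatorial estimate. (Equivalently, one may induct on $n$ by splitting $\mathcal{G}|_{\mathbf{Z}_n}$ according to the value at $x_n$.) This combinatorial lemma is the only genuinely nontrivial ingredient, so it is where the main work lies; a slightly sharper bookkeeping on the coordinates where $\mathcal{G}$ is constant removes the $i=0$ term and yields the sum starting at $i=1$, but this refinement is not needed for what follows.

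\emph{Step 3 (wrap-up).} Maximizing over $\mathbf{Z}_n$ and using $K\le\mathcal{B}/\epsilon$ gives
\[
\mathcal{C}_{\infty}(\epsilon,\mathcal{N},n)\le\sum_{i=0}^{d}\binom{n}{i}\left(\frac{\mathcal{B}}{\epsilon}\right)^{i},
\]
which is the first claimed inequality. For the second, take $\epsilon\le\mathcal{B}$ and $n\ge d$, so $\mathcal{B}/\epsilon\ge1$; factoring out $(\mathcal{B}/\epsilon)^{d}$ and using the elementary bound $\sum_{i=0}^{d}\binom{n}{i}\le(en/d)^{d}$ gives $\mathcal{C}_{\infty}(\epsilon,\mathcal{N},n)<\left(\frac{en\mathcal{B}}{\epsilon d}\right)^{d}$. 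The hard part is Step 2; Steps 1 and 3 are routine once the quantization is arranged so that equal quantized vectors are automatically within $\epsilon$ in $d_{\infty}$.
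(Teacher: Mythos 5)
The paper offers no proof of its own here; the proposition is quoted verbatim with an attribution to Theorem~12.2 of Anthony and Bartlett, so the only comparison possible is against the argument in that reference. Your reconstruction correctly identifies the route taken there: quantize the $[0,\mathcal{B}]$-valued class to a $\{0,\dots,K\}$-valued one (noting that monotone post-composition cannot increase the pseudo-dimension), invoke a Sauer--Shelah analogue for finite-range classes of bounded pseudo-dimension, and then tidy up the binomial sum.

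That said, the heart of the matter is the combinatorial lemma you invoke in Step~2 -- the bound $\left|\mathcal{G}|_{\mathbf{Z}_n}\right|\le\sum_{i=0}^{d}\binom{n}{i}K^{i}$ for a $\{0,\dots,K\}$-valued class $\mathcal{G}$ with $\mathrm{Pdim}(\mathcal{G})\le d$ -- and you only sketch it. The shifting operation must be specified precisely (``lower the entry to the smallest value not already used by a distinct vector agreeing elsewhere'' is ambiguous when several vectors agree off the chosen coordinate), and verifying that the operation preserves cardinality and does not increase pseudo-dimension is exactly the nontrivial technical content to which Anthony and Bartlett devote a standalone theorem. As written, Step~2 is a genuine gap: it states the needed lemma and gestures at a proof strategy rather than establishing it. A smaller point: quantizing into $K+1$ levels naturally produces the sum from $i=0$, and your suggested bookkeeping does not recover the stated sum from $i=1$ when $\mathcal{B}/\epsilon$ is an integer (then $K=\mathcal{B}/\epsilon$ exactly and the $i=0$ term is genuinely extra). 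This is harmless for the application, since $\left(en\mathcal{B}/(\epsilon\,\mathrm{Pdim}(\mathcal{N}))\right)^{\mathrm{Pdim}(\mathcal{N})}$ follows just as well from the $i=0$ version via $\sum_{i=0}^{d}\binom{n}{i}\le(en/d)^{d}$ for $n\ge d\ge 1$, but one should not claim to have reproduced the stated inequality with the $i=1$ lower limit.
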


We now present the bound of pseudo-dimension for the $\mathcal{N}^{1,2}$, the class of  network functions with $\mathrm{ReLU}$ and $\mathrm{ReLU}^2$ activation functions. We first need a lemma stated below.
\begin{Lemma} \label{lemma covering number}
Let $p_1,\cdots,p_m$ be polynomials with $n$ variables  of degree at most $d$. If $n\leq m$, then
\begin{equation*}
|\{(\mathrm{sign}(p_1(x)),\cdots,\mathrm{sign}(p_m(x))):x\in\mathbb{R}^n\}|\leq2\left(\frac{2emd}{n}\right)^n
\end{equation*}
\end{Lemma}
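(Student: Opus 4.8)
The plan is to derive this estimate from a classical bound on the number of sign configurations of a family of real polynomials, followed by an elementary manipulation of binomial sums. The geometric input I would invoke is the Milnor--Thom/Warren-type estimate, in the form reproduced in \cite{anthony2009neural}: a family of $m$ polynomials of degree at most $d$ in $n\le m$ real variables realizes at most $2\sum_{i=0}^{n}\binom{m}{i}(2d)^{i}$ distinct sign vectors in $\{-1,0,+1\}^{m}$. Concretely, one stratifies $\mathbb R^{n}$ according to which subset of the $p_{k}$ vanish; on each stratum the sign vector is constant on connected components, and the Milnor--Thom bound controls the number of such components, the sum over the strata producing the factor $\sum_{i=0}^{n}\binom{m}{i}(2d)^{i}$ (the $2$ accounting for the global sign choice on unbounded pieces). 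Equivalently, I could pass to the doubled family $\{p_{i}-\gamma,\,p_{i}+\gamma\}_{i=1}^{m}$ for a generic small $\gamma>0$, so that every sign vector of $(p_{1},\dots,p_{m})$ — zeros included — appears as a \emph{strict} sign vector of $2m$ polynomials of degree $\le d$, and then apply the strict-sign form of the bound.

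The second step is to put $2\sum_{i=0}^{n}\binom{m}{i}(2d)^{i}$ into closed form. Since $2d\ge 1$, for each $0\le i\le n$ one has $\binom{m}{i}(2d)^{i}=\frac{1}{i!}\prod_{j=0}^{i-1}(2dm-2dj)\le\frac{1}{i!}\prod_{j=0}^{i-1}(2dm-j)=\binom{2dm}{i}$, hence $2\sum_{i=0}^{n}\binom{m}{i}(2d)^{i}\le 2\sum_{i=0}^{n}\binom{2dm}{i}$. Now $N:=2dm\ge m\ge n$, so the standard estimate $\sum_{i=0}^{n}\binom{N}{i}\le\left(\frac{eN}{n}\right)^{n}$ applies; its one-line proof is that for $x=n/N\in(0,1]$, $\sum_{i=0}^{n}\binom{N}{i}\le x^{-n}\sum_{i=0}^{N}\binom{N}{i}x^{i}=x^{-n}(1+x)^{N}\le x^{-n}e^{Nx}=\left(\frac{eN}{n}\right)^{n}$. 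Combining the two steps yields $|\{(\mathrm{sign}(p_{1}(x)),\dots,\mathrm{sign}(p_{m}(x))):x\in\mathbb R^{n}\}|\le 2\left(\frac{2edm}{n}\right)^{n}$, which is the claim; the trivial case $d=0$ (constant polynomials, at most $2$ patterns) is dispatched separately so that $2d\ge 1$ may be assumed throughout.

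Of the two steps, the binomial manipulation is entirely routine, and the essential content sits in the first step: counting the sign vectors that have zero coordinates, not only the full-dimensional (strict) sign patterns on open cells. The cleanest workaround — the doubled family $\{p_{i}\pm\gamma\}$ — still requires checking that a suitable generic $\gamma$ exists and keeping track of the effect of replacing $m$ by $2m$; this route leaves the bound of the stated order but inflates the constant ($2(4edm/n)^{n}$ rather than $2(2edm/n)^{n}$), so to land exactly on $2(2edm/n)^{n}$ I would instead use the stratified count, which is the form stated in the standard references. In all versions the hypothesis $n\le m$ is used in an essential way, both to apply the polynomial-arrangement bound and to validate the inequality $\sum_{i=0}^{n}\binom{N}{i}\le(eN/n)^{n}$ with $N=2dm\ge n$.
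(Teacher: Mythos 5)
The paper does not actually prove this lemma: the ``proof'' is a bare citation, ``See Theorem 8.3 in \cite{anthony2009neural}.'' So there is no paper argument to compare against; your proposal is a sketch of the proof of the cited theorem itself. As such, the overall route you take --- invoke a Warren/Milnor--Thom bound on the number of sign patterns of a polynomial arrangement, then massage the resulting binomial sum into the exponential form $\left(\frac{2emd}{n}\right)^{n}$ --- is exactly the classical derivation, and your second step (the inequalities $\binom{m}{i}(2d)^{i}\le\binom{2dm}{i}$ and $\sum_{i=0}^{n}\binom{N}{i}\le(eN/n)^{n}$ for $N=2dm\ge n$, together with the trivial disposal of $d=0$) is correct and clean.

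The one place you should be more careful is the first step, which you half-acknowledge yourself. The intermediate bound you attribute to the literature, namely that the number of sign vectors \emph{in} $\{-1,0,+1\}^{m}$ is at most $2\sum_{i=0}^{n}\binom{m}{i}(2d)^{i}$, is not the statement of Warren's theorem: Warren bounds the number of \emph{strict} sign patterns (equivalently, connected components of $\{x:\prod_i p_i(x)\neq 0\}$), and in the form $2(2d)^{n}\sum_{i=0}^{n}2^{i}\binom{m}{i}$, which after the same binomial manipulation gives an exponent base of $4edm/n$ (and, through your doubled family $\{p_i\pm\gamma\}$, an even larger base since $m$ becomes $2m$), not $2edm/n$. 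Getting the constant in the lemma therefore requires the sharper all-stratum count you allude to (or, equivalently, citing the inequality in precisely the sign-vector form, as Anthony and Bartlett do), rather than deducing it from the strict-pattern Warren bound plus perturbation. Since you explicitly flag this and defer to the stratified count ``as stated in the standard references,'' the gap is a matter of supplying the reference or the stratum-by-stratum argument in full; the structure of the proof is otherwise exactly what the cited Theorem 8.3 rests on.
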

\begin{proof}
See Theorem 8.3 in \cite{anthony2009neural}.
\end{proof}

\begin{Theorem}\label{pdimb}
Let
\begin{equation*}
\begin{split}
\mathcal{N}:=\{&u\in[0,1]^d: u \text{ can be implemented by a neural network}\\
&\text{with depth no more than }\mathcal{D} \text{ and width no more than }\mathcal{W},\\
&\text{and activation function in each unit  be the ReLU  or the}  \ \  \mathrm{ReLU}^2.\}
\end{split}
\end{equation*}	
Then
\begin{equation*}
\mathrm{Pdim}(\mathcal{N})=\mathcal{O}(\mathcal{D}^2\mathcal{W}^2(\mathcal{D}+\log\mathcal{W})).
\end{equation*}	
\end{Theorem}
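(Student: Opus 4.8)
\emph{Proof proposal.} The plan is to bound $\mathrm{Pdim}(\mathcal{N})$ by the classical parameter-counting argument for piecewise-polynomial networks (cf.\ \cite{anthony2009neural}), the only twist being that the $\mathrm{ReLU}^2$ nonlinearity is quadratic, not affine, on its active branch, so the polynomials it produces are of higher degree. Denote by $W$ the number of weight parameters of a network of depth $\le\mathcal{D}$ and width $\le\mathcal{W}$; since each of the $\le\mathcal{D}$ layers contributes at most $\mathcal{W}^2+\mathcal{W}$ parameters, $W=\mathcal{O}(\mathcal{D}\mathcal{W}^2)$. Write $u_\theta$ for the function realized by a parameter vector $\theta\in\mathbb{R}^{W}$. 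By the definition of pseudo-dimension it suffices to show that, for every collection of points $x_1,\dots,x_n\in[0,1]^d$ and witnesses $y_1,\dots,y_n\in\mathbb{R}$, the number of sign patterns
$$
K(n):=\bigl|\{(\mathrm{sign}(u_\theta(x_i)-y_i))_{i=1}^n:\theta\in\mathbb{R}^{W}\}\bigr|
$$
is strictly smaller than $2^n$ once $n>C\,\mathcal{D}^2\mathcal{W}^2(\mathcal{D}+\log\mathcal{W})$ for an absolute constant $C$; no $n$ beyond that threshold can then be pseudo-shattered.

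To estimate $K(n)$ I would refine $\mathbb{R}^{W}$ into polynomial cells layer by layer. Fix $x_i$. The pre-activations of the units in hidden layer $1$ are affine in $\theta$; conditioning on the sign of each of these $\le n\mathcal{W}$ polynomials fixes the active branch of every first-layer unit, and on each cell of the resulting arrangement the layer-$1$ output at $x_i$ is a single polynomial in $\theta$ of degree $\le 2$. Proceeding inductively, on a cell where the branch pattern of all units in layers $1,\dots,\ell-1$ is fixed, the layer-$(\ell-1)$ output at $x_i$ is one polynomial in $\theta$ of degree $e_{\ell-1}$, with $e_1\le 2$ and $e_\ell\le 2(e_{\ell-1}+1)$, hence $e_\ell\le 2^{\ell+1}$; the layer-$\ell$ pre-activations are therefore $\le n\mathcal{W}$ polynomials in $\le W$ variables of degree $\le 2^{\mathcal{D}}$, so by Lemma \ref{lemma covering number} conditioning on their signs multiplies the number of cells by at most $2\bigl(2e\,n\mathcal{W}\,2^{\mathcal{D}}/W\bigr)^{W}$ (the hypothesis of that lemma can fail only when $n\mathcal{W}<W$, i.e.\ $n<W/\mathcal{W}$, in which case $n$ already lies below the claimed bound). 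Multiplying over the $\mathcal{D}$ hidden layers and then over the $n$ final polynomials $u_\theta(x_i)-y_i$ (each of degree $\le 2^{\mathcal{D}}$, the output layer being affine) yields
$$
K(n)\ \le\ \Bigl(2\bigl(2e\,n\mathcal{W}\,2^{\mathcal{D}}/W\bigr)^{W}\Bigr)^{\mathcal{D}+1}.
$$

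Taking base-$2$ logarithms gives $\log_2 K(n)=\mathcal{O}\bigl(\mathcal{D}W(\mathcal{D}+\log(n\mathcal{W}/W))\bigr)$, so any pseudo-shattered set of size $n$ must satisfy
$$
n\ \le\ C_1\,\mathcal{D}^2W+C_1\,\mathcal{D}W\log(n\mathcal{W}/W).
$$
Invoking the elementary fact that $n\le a+b\log(cn)$ forces $n=\mathcal{O}(a+b\log(bc))$, with $a=C_1\mathcal{D}^2W$, $b=C_1\mathcal{D}W$ and $c=\mathcal{W}/W$, one obtains $n=\mathcal{O}\bigl(\mathcal{D}^2W+\mathcal{D}W\log(\mathcal{D}\mathcal{W})\bigr)$, and substituting $W=\mathcal{O}(\mathcal{D}\mathcal{W}^2)$ together with $\log\mathcal{D}\le\mathcal{D}$ produces $n=\mathcal{O}\bigl(\mathcal{D}^2\mathcal{W}^2(\mathcal{D}+\log\mathcal{W})\bigr)$, which is the asserted bound. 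I expect the main work to be the layer-by-layer bookkeeping: verifying that on each cell of the current arrangement every unit output is genuinely a single polynomial (so that the recursion $e_\ell\le 2(e_{\ell-1}+1)$ is legitimate), keeping the number of polynomials introduced at each layer at $\le n\mathcal{W}$, and handling the degenerate regime $n<W/\mathcal{W}$ where Lemma \ref{lemma covering number} is vacuous. The exponential-in-$\mathcal{D}$ degree growth forced by $\mathrm{ReLU}^2$ is harmless: it enters $K(n)$ only through $\log 2^{\mathcal{D}}=\mathcal{D}$, which is exactly the origin of the additive $\mathcal{D}$ inside the parentheses of the final estimate.
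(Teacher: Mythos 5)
Your proposal is correct and follows essentially the same route as the paper: both adapt the cell-partition argument of \cite{bartlett2019nearly}, reduce the pseudo-dimension to counting sign patterns of polynomials in the parameters via Lemma~\ref{lemma covering number}, and exploit the key observation that although $\mathrm{ReLU}^2$ forces the per-cell polynomial degree to grow like $2^{\mathcal{D}}$, only $\log$ of that degree enters the bound, yielding the additive $\mathcal{D}$ inside the parentheses. The only cosmetic difference is that the paper phrases the reduction via $\mathrm{Pdim}(\mathcal{N})\le\mathrm{VCdim}(\widetilde{\mathcal{N}})$ for the thresholded class and keeps per-layer parameter counts $\mathcal{M}_i$, whereas you carry a single global parameter count $W$ and a slightly tighter degree recursion; the conclusions coincide.
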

\begin{proof}
The argument is follows from the  proof of Theorem 6 in \cite{bartlett2019nearly}.  The result stated here is somewhat stronger then Theorem 6 in \cite{bartlett2019nearly} since $\mathrm{VCdim}(\mathrm{sign}(\mathcal{N}))\leq \mathrm{Pdim}(\mathcal{N})$.

We consider a new set of functions:
\begin{equation*}
\mathcal{\widetilde{N}}=\{\widetilde{u}(x,y)=\mathrm{sign}(u(x)-y): u\in\mathcal{H}\}
\end{equation*}
It is clear that $\mathrm{Pdim}(\mathcal{N})\leq\mathrm{VCdim}(\mathcal{\widetilde{N}})$. We now bound the VC-dimension of $\mathcal{\widetilde{N}}$. Denoting $\mathcal{M}$ as the total number of parameters(weights and biases) in the neural network implementing functions in $\mathcal{N}$, in our case we want to derive the uniform bound for
\begin{equation*}
K_{\{x_i\},\{y_i\}}(m):=|\{(\operatorname{sign}(f(x_{1}, a)-y_1), \ldots, \operatorname{sign}(u(x_{m}, a)-y_m)): a \in \mathbb{R}^{\mathcal{M}}\}|
\end{equation*}
over all $\{x_i\}_{i=1}^{m}\subset X$ and $\{y_i\}_{i=1}^{m}\subset\mathbb{R}$. Actually the maximum of $K_{\{x_i\},\{y_i\}}(m)$ over all $\{x_i\}_{i=1}^{m}\subset X$ and $\{y_i\}_{i=1}^{m}\subset\mathbb{R}$ is the growth function $\mathcal{G}_{\mathcal{\widetilde{N}}}(m)$.
 In order to apply Lemma $\ref{lemma covering number}$, we partition the parameter space $\mathbb{R}^{\mathcal{M}}$ into several subsets to ensure that in each subset $u(x_i,a)-y_i$ is a polynomial with respcet to $a$ without any breakpoints. In fact, our partition is exactly the same as the partition in \cite{bartlett2019nearly}. Denote the partition as $\{P_1,P_2,\cdots,P_N\}$ with some integer $N$ satisfying
\begin{equation} \label{pdimb1}
N\leq\prod_{i=1}^{\mathcal{D}-1}2\left(\frac{2emk_i(1+(i-1)2^{i-1})}{\mathcal{M}_i}\right)^{\mathcal{M}_i}
\end{equation}
where $k_i$ and $\mathcal{M}_i$ denotes the number of units at the $i$th layer and the total number of parameters at the inputs to units in all the layers up to layer $i$ of the neural network implementing functions in $\mathcal{N}$, respectively. See \cite{bartlett2019nearly} for the construction of the partition. Obviously we have
\begin{equation} \label{pdimb2}
K_{\{x_i\},\{y_i\}}(m)\leq\sum_{i=1}^{N}|\{(\operatorname{sign}(u(x_{1}, a)-y_1), \cdots, \operatorname{sign}(u(x_{m}, a)-y_m)): a\in P_i\}|
\end{equation}
Note that $u(x_i,a)-y_i$ is a polynomial with respect to $a$ with degree the same as the degree of $u(x_i,a)$, which is equal to $1 + (\mathcal{D}-1)2^{\mathcal{D}-1}$ as shown in \cite{bartlett2019nearly}.  Hence by Lemma $\ref{lemma covering number}$, we have
\begin{align}
&|\{(\operatorname{sign}(u(x_{1}, a)-y_1), \cdots, \operatorname{sign}(u(x_{m}, a)-y_m)): a\in P_i\}|\nonumber\\
&\leq2\left(\frac{2em(1+(\mathcal{D}-1)2^{\mathcal{D}-1})}{\mathcal{M}_{\mathcal{D}}}\right)^{\mathcal{M}_{\mathcal{D}}}\label{pdimb3}.
\end{align}
Combining $(\ref{pdimb1}), (\ref{pdimb2}), (\ref{pdimb3})$ yields
\begin{equation*}
K_{\{x_i\},\{y_i\}}(m)\leq\prod_{i=1}^{\mathcal{D}}2\left(\frac{2emk_i(1+(i-1)2^{i-1})}{\mathcal{M}_i}\right)^{\mathcal{M}_i}.
\end{equation*}
We then have
\begin{equation*}
\mathcal{G}_{\mathcal{\widetilde{N}}}(m)\leq\prod_{i=1}^{\mathcal{D}}2\left(\frac{2emk_i(1+(i-1)2^{i-1})}{\mathcal{M}_i}\right)^{\mathcal{M}_i},
\end{equation*}
since the maximum of $K_{\{x_i\},\{y_i\}}(m)$ over all $\{x_i\}_{i=1}^{m}\subset X$ and $\{y_i\}_{i=1}^{m}\subset\mathbb{R}$ is the growth function $\mathcal{G}_{\mathcal{\widetilde{N}}}(m)$. Some algebras  as that of  the proof of Theorem 6 in \cite{bartlett2019nearly}, we obtain
\begin{equation*}
\mathrm{Pdim}(\mathcal{N})\leq\mathcal{O}\left(\mathcal{D}^2\mathcal{W}^2 \log \mathcal{U}+\mathcal{D}^3\mathcal{W}^2\right)
=\mathcal{O}\left(\mathcal{D}^2\mathcal{W}^2\left( \mathcal{D}+\log \mathcal{W}\right)\right)
\end{equation*}
where $\mathcal{U}$ refers to the number of units of the neural network implementing functions in $\mathcal{N}$.
\end{proof}

\subsection{Main results}
With the above preparation we present the main results of this paper in the scenario  $\mathcal{E}_{opt} =0$.
\begin{Theorem}\label{errmain}
Let $u^*$ is the solution of (\ref{mse}) with bounded  $f,g,w$.
$\widehat{u}_{\phi}$ is the minimizer of deep Ritz method defined in (\ref{objsam}) with $n = N(M)$ random samples.  If we set  the network parameters depth and width
 in the $\mathrm{ReLU}^2$ network $\mathcal{N}^2_{\mathcal{D},\mathcal{W},\mathcal{B}}$ as  $$\mathcal{D} \leq \lceil\log_2d\rceil+3, \mathcal{W}\leq \mathcal{O}(4d\left \lceil   n^{\frac{1}{d+2+\nu}}-4\right \rceil^d).$$ Let
 $\mathcal{B}$ be    the constraint on the output of function value and gradient norm of  $u\in \mathcal{N}^2$ according to (\ref{boundb}).
Then,
 $$\mathbb{E}_{\mathbf{X},\mathbf{Y}}[\|\widehat{u}_{\phi}-u^{*}\|_{H^{1}(\Omega)}^2]\leq C_{\mathcal{B},c_1,c_2,c_3,d} {\mathcal{O}} (N^{-1/(d+2+\nu)}+M^{-1/(d+2+\nu)}),$$
 where $\nu>0$ but can be arbitrary small.
\end{Theorem}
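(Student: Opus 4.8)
The plan is to combine the three ingredients already in place: the decomposition of Lemma~\ref{errdec}, the approximation estimate of Theorem~\ref{errapp}, and the statistical-error machinery built out of Lemmas~\ref{errstadec}--\ref{sym2}, Proposition~\ref{covering number pdim} and Theorem~\ref{pdimb}. Working under $\mathcal{E}_{opt}=0$, so that $u_{\phi_{\mathcal{A}}}=\widehat{u}_{\phi}$, taking expectations in Lemma~\ref{errdec} reduces the claim to
\begin{equation*}
\mathbb{E}_{\mathbf{X},\mathbf{Y}}\big[\|\widehat{u}_{\phi}-u^{*}\|_{H^{1}(\Omega)}^2\big]\ \le\ \frac{2}{c_1\wedge 1}\Big(\mathcal{E}_{app}+\mathbb{E}_{\mathbf{X},\mathbf{Y}}[\mathcal{E}_{sta}]\Big),
\end{equation*}
so it suffices to bound $\mathcal{E}_{app}$ and $\mathbb{E}[\mathcal{E}_{sta}]$ in terms of the width $\mathcal{W}$ and the sample sizes $N,M$, and then optimize over $\mathcal{W}$. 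For the first term, Theorem~\ref{errapp} shows that a network in $\mathcal{N}^{2}_{\mathcal{D},\mathcal{W},\mathcal{B}}$ with $\mathcal{D}\le\lceil\log_2 d\rceil+3$ and width of order $d\,\epsilon^{-d}$ achieves $\mathcal{E}_{app}\le\tfrac{1}{2}(\|w\|_{L^\infty}\vee1)\epsilon^2\le\tfrac{1}{2}(c_3\vee1)\epsilon^2$; equivalently, $\mathcal{E}_{app}$ is of order $\mathcal{W}^{-2/d}$ up to a constant depending on $c_2,d$.

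For the statistical error I would use Lemma~\ref{errstadec} and linearity to get $\mathbb{E}[\mathcal{E}_{sta}]\le 2\sum_{j=1}^{4}\mathbb{E}[\sup_{u\in\mathcal{N}^2}|\mathcal{L}_j(u)-\widehat{\mathcal{L}_j}(u)|]$. For $j=1,2,3$ the integrand is Lipschitz in the network value (Lemma~\ref{lembound}), so the symmetrization of Lemma~\ref{sym} followed by the contraction inequality of Lemma~\ref{Redm1} bounds each such term by a constant multiple (involving $c_3,|\Omega|,|\partial\Omega|$) of $\mathfrak{R}(\mathcal{N}^2)$; for the non-Lipschitz term $j=4$, Lemma~\ref{sym2} bounds it by $\mathfrak{R}(\mathcal{N}^{1,2})$, where $\mathcal{N}^{1,2}$ is the mixed $\mathrm{ReLU}$-$\mathrm{ReLU}^2$ class realizing $\|\nabla u\|_2^2$, of depth $\mathcal{D}+3$ and width $d(\mathcal{D}+2)\mathcal{W}$. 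Both Rademacher complexities I would estimate through Dudley's entropy integral (Lemma~\ref{Redm2}), feeding in the covering-number bound of Proposition~\ref{covering number pdim} together with the pseudo-dimension bound of Theorem~\ref{pdimb} applied with the correct depth and width for each class. Since $\mathcal{D}\le\lceil\log_2 d\rceil+3$ depends only on $d$, both $\mathrm{Pdim}(\mathcal{N}^{2})$ and $\mathrm{Pdim}(\mathcal{N}^{1,2})$ are of order $\mathcal{W}^{2}\log\mathcal{W}$ with a $d$-dependent constant, and optimizing the truncation level in Dudley's formula (e.g.\ $\delta\asymp\mathcal{B}/\sqrt{n}$) yields for each class a bound of order $\mathcal{B}\sqrt{\mathrm{Pdim}\cdot(\log n)/n}$, which is $\lesssim_{d}\mathcal{B}\,\mathcal{W}\sqrt{(\log\mathcal{W})(\log n)/n}$, with $n=N$ for the domain terms $j=1,2,4$ and $n=M$ for the boundary term $j=3$. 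Collecting the constants one obtains
\begin{equation*}
\mathbb{E}[\mathcal{E}_{sta}]\ \lesssim_{\mathcal{B},c_3,d}\ \mathcal{W}\sqrt{\log\mathcal{W}}\left(\sqrt{\frac{\log N}{N}}+\sqrt{\frac{\log M}{M}}\right).
\end{equation*}

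Combining the two estimates gives $\mathbb{E}[\|\widehat{u}_\phi-u^*\|_{H^1(\Omega)}^2]\lesssim_{\mathcal{B},c_1,c_2,c_3,d}\mathcal{W}^{-2/d}+\mathcal{W}\sqrt{\log\mathcal{W}}\big(\sqrt{(\log N)/N}+\sqrt{(\log M)/M}\big)$. I would then pick $\mathcal{W}$ as prescribed in the statement so that the two contributions balance, i.e.\ $\mathcal{W}^{-2/d}\asymp\mathcal{W}/\sqrt{n}$, which amounts to $\mathcal{W}$ of order $n^{d/(2(d+2))}$ up to the logarithmic correction and makes both contributions of order $n^{-1/(d+2)}\cdot\mathrm{polylog}(n)$, taking $n=N$ for the domain part and $n=M$ for the boundary part. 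Finally, since $\mathrm{polylog}(n)=o(n^{\eta})$ for every $\eta>0$, for any $\nu>0$ the polylog factor is absorbed, $n^{-1/(d+2)}\mathrm{polylog}(n)\le n^{-1/(d+2+\nu)}$, once $n$ exceeds a threshold depending only on $\nu,d$; for the finitely many smaller $n$ the asserted bound holds after enlarging the constant, since $u\in\mathcal{N}^2$ has $\|u\|_{H^1(\Omega)}$ controlled by $\mathcal{B},|\Omega|$ while $\|u^*\|_{H^1(\Omega)}\le c_2$. This yields $\mathbb{E}_{\mathbf{X},\mathbf{Y}}[\|\widehat{u}_\phi-u^*\|_{H^1(\Omega)}^2]\le C_{\mathcal{B},c_1,c_2,c_3,d}\,\mathcal{O}(N^{-1/(d+2+\nu)}+M^{-1/(d+2+\nu)})$.

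The genuinely hard step is not this final assembly, which is essentially bookkeeping, but the work done upstream in Lemma~\ref{sym2} and Theorem~\ref{pdimb}: converting the non-Lipschitz quantity $\sup_u|\mathcal{L}_4(u)-\widehat{\mathcal{L}_4}(u)|$ into a controllable Rademacher complexity by recognizing $\|\nabla u\|_2^2$ as itself a $\mathrm{ReLU}$-$\mathrm{ReLU}^2$ network and bounding the pseudo-dimension of mixed-activation networks. Within the present proof the points needing care are: inserting the depth $\mathcal{D}+3$ and width $d(\mathcal{D}+2)\mathcal{W}$ of the gradient-norm network (rather than $\mathcal{D},\mathcal{W}$) into Theorem~\ref{pdimb}; using $\mathcal{D}=\mathcal{O}(\log d)$ so the pseudo-dimensions remain polynomial in $\mathcal{W}$ and the rate does not degrade with depth; carrying the two sample sizes $N$ and $M$ separately throughout; and choosing $\nu>0$ at the end to swallow the logarithmic factors from the covering number and Dudley's integral.
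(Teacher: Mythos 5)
Your proposal is correct and follows essentially the same route as the paper: decompose via Lemma~\ref{errdec} with $\mathcal{E}_{opt}=0$, bound $\mathcal{E}_{app}$ by Theorem~\ref{errapp} (equivalently $\mathcal{W}^{-2/d}$), bound $\mathcal{E}_{sta}$ through symmetrization, contraction, and the gradient-network reduction (Lemmas~\ref{errstadec}--\ref{sym2}) into Dudley's integral fed by Proposition~\ref{covering number pdim} and Theorem~\ref{pdimb}, then balance $\mathcal{W}$ against $n$ and absorb logarithms into $\nu$. The only cosmetic differences are the explicit choice of the Dudley truncation level ($\delta\asymp\mathcal{B}\sqrt{\mathrm{Pdim}/n}$ in the paper versus your $\delta\asymp\mathcal{B}/\sqrt{n}$, both giving the same order) and your slightly more careful separate bookkeeping of $N$ and $M$ and of the small-$n$ regime; neither affects the result.
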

\begin{proof}
In order to apply Lemma \ref{Redm2}, we need to bound  the term
\begin{equation*}
\begin{split}
&\frac{1}{\sqrt{n}}\int_{\delta}^{\mathcal{B}}\sqrt{\log(2\mathcal{C}(\epsilon,\mathcal{N},n))}d\epsilon \\
&\leq \frac{\mathcal{B}}{\sqrt{n}}+ \frac{1}{\sqrt{n}}\int_{\delta}^{B}\sqrt{\log\left(\frac{en\mathcal{B}}{\epsilon\cdot\mathrm{Pdim}(\mathcal{N})}\right)^{\mathrm{Pdim}(\mathcal{N})}}d\epsilon\\
&\leq \frac{\mathcal{B}}{\sqrt{n}}+ \left(\frac{\mathrm{Pdim}(\mathcal{N})}{n}\right)^{1/2}\int_{\delta}^{B}\sqrt{\log\left(\frac{en\mathcal{B}}{\epsilon\cdot\mathrm{Pdim}(\mathcal{N})}\right)}d\epsilon,
\end{split}
\end{equation*}
where in the first inequality  we use  Proposition $\ref{covering number pdim}$. Now we calculate the integral. Set
\begin{equation*}
t=\sqrt{\log\left(\frac{en\mathcal{B}}{\epsilon\cdot\mathrm{Pdim}(\mathcal{N})}\right)}
\end{equation*}
then $\epsilon=\frac{en\mathcal{B}}{\mathrm{Pdim}(\mathcal{N})}\cdot e^{-t^2}$. Denote $t_1=\sqrt{\log\left(\frac{en\mathcal{B}}{\mathcal{B}\cdot\mathrm{Pdim}(\mathcal{N})}\right)}$, $t_2=\sqrt{\log\left(\frac{en\mathcal{B}}{\delta\cdot\mathrm{Pdim}(\mathcal{N})}\right)}$.  And
\begin{equation*}
\begin{split}
&\int_{\delta}^{\mathcal{B}}\sqrt{\log\left(\frac{en\mathcal{B}}{\epsilon\cdot\mathrm{Pdim}(\mathcal{N})}\right)}d\epsilon\\
&=\frac{2en\mathcal{B}}{\mathrm{Pdim}(\mathcal{N})}\int_{t_1}^{t_2}t^2e^{-t^2}dt\\
&=\frac{2en\mathcal{B}}{\mathrm{Pdim}(\mathcal{N})}\int_{t_1}^{t_2}t\left(\frac{-e^{-t^2}}{2}\right)'dt\\
&=\frac{en\mathcal{B}}{\mathrm{Pdim}(\mathcal{N})}\left[t_1e^{-t_1^2}-t_2e^{-t_2^2}+\int_{t_1}^{t_2}e^{-t^2}dt\right]\\
&\leq\frac{en\mathcal{B}}{\mathrm{Pdim}(\mathcal{N})}\left[t_1e^{-t_1^2}-t_2e^{-t_2^2}+(t_2-t_1)e^{-t_1^2}\right]\\
&\leq\frac{en\mathcal{B}}{\mathrm{Pdim}(\mathcal{N})}\cdot t_2e^{-t_1^2}=\mathcal{B}\sqrt{\log\left(\frac{en\mathcal{B}}{\delta\cdot\mathrm{Pdim}(\mathcal{N})}\right)}.
\end{split}
\end{equation*}
Choosing $\delta=\mathcal{B}\left(\frac{\mathrm{Pdim}(\mathcal{N})}{n}\right)^{1/2}\leq\mathcal{B}$, by Lemma \ref{Redm2} and the above display, we get for both  $$\mathcal{N} = \mathcal{N}^2, \\ \mathrm{and}\ \   \mathcal{N} = \mathcal{N}^{1,2}$$ there holds
\begin{align}
&\mathfrak{R}(\mathcal{N}) \nonumber \\
& \leq 4\delta+\frac{12}{\sqrt{n}}\int_{\delta}^{\mathcal{B}}\sqrt{\log(2\mathcal{C}(\epsilon,\mathcal{N},n))}d\epsilon \nonumber \\
&\leq4\delta+  \frac{12\mathcal{B}}{\sqrt{n}}+ 12\mathcal{B}\left(\frac{\mathrm{Pdim}(\mathcal{N})}{n}\right)^{1/2}\sqrt{\log\left(\frac{en\mathcal{B}}{\delta\cdot\mathrm{Pdim}(\mathcal{N})}\right)} \nonumber \\
&\leq28\sqrt{\frac{3}{2}}\mathcal{B}\left(\frac{\mathrm{Pdim}(\mathcal{N})}{n}\right)^{1/2}\sqrt{\log\left(\frac{en}{\mathrm{Pdim}(\mathcal{N})}\right)} \label{emperr}.
\end{align}
Then by Lemma \ref{errstadec}, \ref{Redm1}, \ref{sym}, \ref{Redm2}, \ref{sym2} and equation  (\ref{emperr}), we have
\begin{align*}
&{\mathcal{E}_{sta}}  = 2\sup_{u \in \mathcal{N}^2} |\mathcal{L}(u) - \widehat{\mathcal{L}}(u) |\\
&\leq 2 (c_3+c_3+c_3^2)\mathfrak{R}(\mathcal{N}^2) + 2 \mathfrak{R}(\mathcal{N}^{1,2})\\
&\leq 56\sqrt{\frac{3}{2}}(2c_3+c_3^2)\mathcal{B}\left(\frac{\mathrm{Pdim}(\mathcal{N}^2)}{n}\right)^{1/2}\sqrt{\log\left(\frac{en}{\mathrm{Pdim}(\mathcal{N}^2)}\right)}\\
&+56\sqrt{\frac{3}{2}}\mathcal{B}\left(\frac{\mathrm{Pdim}(\mathcal{N}^{1,2})}{n}\right)^{1/2}\sqrt{\log\left(\frac{en}{\mathrm{Pdim}(\mathcal{N}^{1,2})}\right)}.
\end{align*}
Since $\mathrm{Pdim}(\mathcal{N}^{2}) \leq \mathrm{Pdim}(\mathcal{N}^{1,2})$,
plugging the upper bound of $\mathrm{Pdim}(\mathcal{N}^{1,2})$ derived in Theorem \ref{pdimb} into the above display  and using the relationship of depth and width between $\mathcal{N}^2$ and $\mathcal{N}^{1,2}$ proved in Lemma \ref{sym2}, we get
\begin{equation}\label{sterrf}
\mathcal{E}_{sta} \leq C_{\mathcal{B},c_3}\left[d(\mathcal{D}+3)(\mathcal{D}+2)\mathcal{W}\sqrt{\frac{\mathcal{D}+3+\log (d(\mathcal{D}+2)\mathcal{W})}{n}}\right]^{1-\nu},
\end{equation}
where $\nu>0$ can be arbitrarily small. Combing (\ref{sterrf}) with the approximation error in Theorem \ref{errapp} and  taking $\epsilon^{2} = C_{\mathcal{B},c_1,c_2,c_3}C\left(\frac{1}{n}\right)^{\frac{1}{d+2+\nu}}$, we get that
\begin{align*}
&\mathbb{E}_{\mathbf{X},\mathbf{Y}}[\|\widehat{u}_{\phi}-u^{*}\|_{H^{1}(\Omega)}^2]\\
&\leq  \frac{2}{c_1\wedge 1} \left[C_{\mathcal{B},c_3}\left( d(\mathcal{D}+3)(\mathcal{D}+2)\mathcal{W}\sqrt{\frac{\mathcal{D}+3+\log (d(\mathcal{D}+2)\mathcal{W})}{n}}\right)^{1-\mu} + \frac{c_3+1}{2}\epsilon^2\right]\\
&\leq \frac{2}{c_1\wedge 1} \left[C_{\mathcal{B},c_3}\left( 4d^2(\lceil\log d\rceil+6)(\lceil\log d\rceil+5)\left \lceil   \frac{Cc_2}{\epsilon}-4\right \rceil^d\cdot\right.\right. \\
&\quad\left.\left.\sqrt{\frac{\lceil\log d\rceil+6+\log \left(d(\lceil\log d\rceil+5)\cdot4d\left \lceil   \frac{Cc_2}{\epsilon}-4\right \rceil^d\right)}{n}}\right)^{1-\mu} + \frac{c_3+1}{2}\epsilon^2\right]\\
&\leq C_{\mathcal{B},c_1,c_2,c_3,d} {\mathcal{O}} \left(n^{-1/(d+2+\nu)}\right).
\end{align*}
\end{proof}

\begin{Remark}
Deep Ritz method  is actually a kinds of deep nonparametric estimation method   where we estimate functions from random samples.
The benefit is that we can use DRM to handle PDEs in high dimension since  only  a small bach of samples
is needed during  SGD training. In contrast, we have form the loading matrix and vector explicitly in FEM.
However, what we have to pay is that  the convergence rate as illustrate as follows.
Let $N= N=\mathcal{O}(\frac{1}{h^d})$, where $h$ is the size of the mesh in FEM. From Theorem \ref{errmain}, we get $$\mathbb{E}_{\mathbf{X},\mathbf{Y}}[\|\widehat{u}_{\phi}-u^{*}\|_{H^{1}(\Omega)}^2]=\mathcal{O}(h^{\frac{d}{d+2+\nu}}).$$
In the case $d=2$,
by Markov's inequality and the above display, we get   with  high probability, $$\|\widehat{u}_{\phi}-u^{*}\|_{H^{1}(\Omega)}\leq \mathcal{O}(h^{\frac{1}{4+\nu}}).$$ Comparing the well known results of FEM where the convergence rate in $H^1$ norm is $\mathcal{O}(h)$, the rate proved here for the DRM method is far from satisfactory.

In the literature of nonparametric regression, where functions  living in certain function classes are  estimated from  $n$ paired random samples,
the best convergence rate   in $H^1$ norm for estimating functions in $H^2$  is  $\mathcal{O}(n^{-\frac{1}{4+d}})$ \cite{stone1982optimal}.
Obviously, the nonparametric learning  task  in DRM is not easier than nonparametric regressions.
What we proved here is $\mathcal{O}(n^{-\frac{1}{4+2d+\nu}})$ in $H^1$ norm with $\nu$ can be arbitrary small.
\textbf{We conjecture that  the best convergence rate in $H^1$ norm of DRM with both $n$ training samples in both the domain and boundary is also  $\mathcal{O}(n^{-\frac{1}{4+d}})$.}   Upon this conjecture,  we get the optimal convergence rate of DRM  for  $d=2$ is  $\|\widehat{u}_{\phi}-u^{*}\|_{H^{1}(\Omega)}\leq \mathcal{O}(h^{\frac{1}{3}})$ with high probability.

The convergence rate of DRM  proved here suffers the curse of dimensionality.  One possible direction to improve  the convergence rate  and  reduce the curse of dimensionality is considering    solutions of PDE (\ref{mse}) with higher  regularity. For example,  if we  assume  $f\in H^s(\Omega),  s \geq 1$,   deep Ritz method using deep neural network with $\mathrm{ReLU}^{s+2}$ activation functions  will reduce the curse since the higher regularity  assumption  can improve both the approximation and statistical error.  We leave the detail of this idea in  a following up work.
\end{Remark}

\section{Conclusion and extension}
In this paper, we provide a rigorous numerical analysis on deep Ritz method  (DRM) \cite{wan11} for second order elliptic equations with Neumann boundary conditions.
We establish the first   nonasymptotic convergence rate in $H^1$ norm  on   DRM for general deep networks with $\mathrm{ReLU}^2$ activation functions.
In addition to provide theoretical justification of DRM, our study  also provides   guidance   on how to set the  hyper-parameter  of depth and width to achieve the desired convergence rate in terms of number of training samples.
Technically, we derive   bounds on  the approximation error of deep $\mathrm{ReLU}^2$ network in $H^1$ norm and on the Rademacher complexity  of the non-Lipschitz composition of  gradient norm  and   $\mathrm{ReLU}^2$ network.

There are several directions for our future exploration. First, it is easy to extend the current analysis  for general second order elliptic equations with variational form under  Dirichlet or Robin boundary conditions. Second, the approximation  and statistical error  bounds deriving here can be used for  studying  the nonasymptotic convergence rate for  residual based method (PINNS).
Studying deep DGM by combing current analysis with  the tools for analyzing GAN \cite{goodfellow14} is also of immense interest.

\section*{Acknowledgements}
 Y.  Jiao was supported in part by
the National Science Foundation of China
under Grant 11871474  and by the research fund
of KLATASDSMOE.
X. Lu is partially supported by the National Science Foundation of China (No. 11871385), the National Key Research and Development Program of China (No.2018YFC1314600) and the Natural Science Foundation of Hubei Province (No. 2019CFA007).
J.  Yang is supported by National Science Foundation of China (No. 12071362 and 11671312), the National Key Research and Development Program of China (No. 2020YFA0714200), the Natural Science Foundation of Hubei Province (No. 2019CFA007).
The numerical simulations in this work have been done on the supercomputing system in the Supercomputing Center of Wuhan University.

\bibliographystyle{siam}
\bibliography{ref}	
\end{document}